\documentclass[runningheads,11pt]{llncs}

\textwidth 16cm
\textheight 22cm
\oddsidemargin 0cm
\evensidemargin 0cm
\topmargin -1cm

\usepackage{amssymb}
\usepackage{graphicx}
\usepackage{mathrsfs}
\usepackage{hyperref}
\usepackage{color}
\usepackage{amsmath}
\usepackage{url}
\usepackage{cite}

\urldef{\mailsa}\path|liliping@amss.ac.cn|    
\newcommand{\keywords}[1]{\par\addvspace\baselineskip
\noindent\keywordname\enspace\ignorespaces#1}

\def\sE{{\mathscr E}}
\def\sF{{\mathscr F}}

\def\bR{{\mathbb{R}}}

\def\bfP{{\mathbf{P}}}

\def\b0{{\textbf{0}}}

\def\<{\langle}
\def\>{\rangle}

\begin{document}


\title{Hunt's hypothesis and symmetrization for general 1-dimensional diffusions}

\titlerunning{Hunt's hypothesis and symmetrization}

%
%


\author{Liping Li${}^{1,2}$%
\thanks{The author is partially supported by NSFC (No. 11688101, No. 11801546 and No. 11931004), Key Laboratory of Random Complex Structures and Data Science, Academy of Mathematics and Systems Science, Chinese Academy of Sciences (No. 2008DP173182), and Alexander von Humboldt Foundation in Germany.\\
Email: \mailsa}%
}
\institute{${}^1$RCSDS, HCMS, Academy of Mathematics and Systems Science,\\ Chinese Academy of Sciences, Beijing, China\\
$\quad$\\
${}^2$Bielefeld University, Bielefeld, Germany}

\authorrunning{L. Li}



%
%

\maketitle

\begin{abstract}
In this paper, we will consider the problem that how far from Hunt's hypothesis (H) to symmetrization for a general 1-dimensional diffusion.  A characterization of (H) involving the classification of points for this diffusion will be first obtained.  Then the main result shows that such a process is symmetrizable,  if and only if (H) holds and a certain family of asymmetric shunt points is empty.  Furthermore, we will also derive the representation of associated Dirichlet forms of general 1-dimensional diffusions under symmetry. 

\keywords{Hunt's hypothesis,  symmetrization,  general 1-dimensional diffusions,  Dirichlet forms.}
\end{abstract}

\section{Introduction}\label{Sec-intro}

For a so-called Hunt process, Hunt's hypothesis (H) says that every semi-polar set is polar; see, e.g., \cite{BG68}.  This is one of the most important hypotheses, as well as a long outstanding problem, in probabilistic potential theory.  Rich studies appeared in history to explore whether and when it holds, see, e.g., \cite{H56, GR86}.  Symmetrization usually implies (H).  Except for this classical sufficient condition, most works concentrate on L\'evy processes. Particularly a celebrated Kanda's condition for L\'evy processes admitting asymmetry can also lead to (H), see \cite{K76, K78, R77}.

In this paper we will consider the problem concerning (H) for a general $1$-dimensional diffusion $X$, i.e. a continuous strong Markovian process on $\bR$, which is not necessarily a L\'evy process.  When $X$ is regular in the sense that $\mathbf{P}_x(\sigma_y<\infty)>0$ for any $x,y\in \bR$,  it is obviously symmetric with respect to a so-called speed measure, see, e.g., \cite[Chapter V, \S47]{RW87}, and consequently (H) holds.  Hence the non-regular cases are of main interest.  Thanks to the foundational results for general 1-dimensional diffusions in, e.g., \cite{I06, IM74},  we can classify all points in $\bR$ into different families,  like that of regular points, left/right shunt points, traps, and etc.  A concrete definition is presented in Definition~\ref{DEF21}.  By employing this classification,  we will obtain an equivalent condition,  essentially ruling out two kinds of shunt points as illustrated in Example~\ref{Exa0},  to Hunt's hypothesis (H);  see Theorem~\ref{THM0}. 

As mentioned above, symmetrization usually implies (H), but not vice versa.  Then it is interesting to ask how far from (H) to symmetrization.  An enlightening example, Example~\ref{Exa1},  shows that (H) admits a certain family of asymmtric shunt points, denoted by $\Lambda_{ap}$ in \eqref{definitionofLambdaa},  but symmetrization does not.  Then the main result states roughly that $X$ is symmetrizable, if and only if (H) holds and $\Lambda_{ap}$ is empty. 

Under the symmetry,  the theory of Dirichelt forms is a powerful tool to study Markov processes and related probabilistic potential theory. We refer related terminologies and notations to, e.g., \cite{CF12, FOT11}.  In \cite{LY19}, the author with a co-author has already derived the representation of regular Dirichlet forms associated with general 1-dimensional diffusions.  However in the current case $X$ is only symmetrizable,  the associated Dirichlet form is not necessarily regular.  As a byproduct of the characterization of symmetrization for $X$,  we will further obtain the representation of associated Dirichlet form of $X$ and explore the problem when it becomes regular.  The answer is quite simple: It is regular, if and only if the chosen symmetrizing measure is Radon. 

We need to point out that although the diffusions under consideration in this paper are on $\bR$, all analogical results still hold for those on an interval. 

The rest of this paper is organized as follows.  In \S\ref{SEC2}, we will introduce some terminologies regarding Hunt's hypothesis and symmetrization for a Hunt process.  The section \S\ref{SEC3} is devoted to the classification of all points for a general diffusion process $X$ on $\bR$. Particularly, two useful lemmas for proving the main results will be also presented.  In \S\ref{SEC0}, an equivalent condition to (H) will be obtained. The main results,  characterizing the symmetrization of $X$ outside the family of all traps as well as on $\bR$, will be shown in \S\ref{SEC4} and \S\ref{SEC5}.  Finally the associated Dirichlet forms of $X$ under symmetry will be formulated in \S\ref{SEC6}.

\section{Hunt's hypothesis (H) and symmetrization}\label{SEC2}

Let $M$ be a Hunt process on $E$.  Given a nearly Borel set $A\subset E$,  $\sigma_A:=\{t>0:M_t\in A\}$ is the first hitting time of $A$. A point $x\in E$ is called regular for $A$ provided $\mathbf{P}_x(\sigma_A=0)=1$.  Set $A^r:=\{x \in E: \mathbf{P}_x(\sigma_A=0)=1\}$, i.e. the family of all regular points for $A$.  The following definitions are elementary in probabilistic potential theory:

\begin{definition}
Let $\xi$ be a fully supported $\sigma$-finite positive measure on $E$. A set $A\subset E$ is called 
\begin{itemize}
\item[(1)] A polar set provided that there exists a nearly Borel set $B\supset A$ such that $\mathbf{P}_x(\sigma_B<\infty)=0$ for all $x\in E$;
\item[(2)] A $\xi$-polar set provided that there exists a nearly Borel set $B\supset A$ such that $\mathbf{P}_\xi(\sigma_B<\infty)=0$, i.e.  for $\xi$-a.e. $x\in E$,  $\mathbf{P}_x(\sigma_B<\infty)=0$;
\item[(3)] A thin set provided that there exists a nearly Borel set $B\supset A$ such that $B^r=\emptyset$;
\item[(4)] A semi-polar set provided that $A$ is contained in a union of countably many thin sets. 
\end{itemize}
\end{definition}

 Clearly,  a polar set is $\xi$-polar, and in principle, when $M$ satisfies the so-called absolute continuity condition, then a $\xi$-polar set is also polar.  A polar set is thin, and a thin set is semi-polar.  Hunt's hypothesis (H) raised in \cite{H56}, which plays a crucial role in probabilistic potential theory,  says that
 \begin{description}
 \item[\rm (H)] A semi-polar set is polar. 
\end{description}  
In this paper, we will also consider a weak form of (H): 
\begin{description}
\item[\rm ($\text{H}_\xi$)] A semi-polar set is $\xi$-polar. 
\end{description}
Hunt's hypothesis (H) does not always hold.  For example, it fails for the process $M_t=x+t$, $\mathbf{P}_x$-a.s., on $\bR$. 

\begin{definition}
The process $M$ is called \emph{symmetrizable,} if there exists a fully supported $\sigma$-finite measure $m$ such that 
\[
	\int_E P_t f(x)g(x)m(dx)=\int_E f(x)P_tg(x)m(dx)
\]
for all positive Borel functions $f,g$ on $E$,  where $(P_t)_{t\geq 0}$ is the semigroup of $M$.  Meanwhile $M$ is called symmetric with respect to $m$.  
\end{definition}
 
Hunt claimed in \cite{H56} that the symmetrization of $M$ usually implies (H).  Typical examples include Brownian motions, symmetric $\alpha$-stable processes and etc.   However, symmetrization is not necessary for (H).  Getoor conjectured that except in certain obvious cases where a translation component interferes, essentially all L\'evy processes satisfy (H),  see, e.g., \cite{GR86}.  As mentioned in \S\ref{Sec-intro},  a Kanda's condition for L\'evy processes admitting asymmetry leads to (H). In a celebrated paper \cite{S77},  Silverstein proved that a so-called sector condition implies (H) for general Markov processes. This condition is also a foundation of the theory of non-symmetric Dirichlet forms, see, e.g., \cite{MR92}. 

\section{Classification of points for general 1-dimensional diffusions}\label{SEC3}

Our stage is a diffusion process $X=\{\Omega, \mathcal{F}, \mathcal{F}_t, X_t, \theta_t, \mathbf{P}_x\}$, i.e. a continuous strong Markovian process, on $\bR$ with a ceremony $\Delta$, an isolated point attaching to $\bR$.  We impose that $X$ has no killing inside: For any $x\in \bR$,
\[
	\mathbf{P}_x(X_{\zeta-}\in \bR, \zeta<\infty)=0,
\]
where $\zeta=\inf\{t>0: X_t=\Delta\}$ is the lifetime of $X$. 

Let $\sigma_x:=\sigma_{\{x\}}$ denote the first hitting time of $\{x\}$.  For convenience, we write $x\rightarrow y$ to stand for $\mathbf{P}_x(\sigma_y<\infty)>0$ and $x \leftrightarrow y$ to stand for  $\mathbf{P}_x(\sigma_y<\infty)\mathbf{P}_y(\sigma_x<\infty)>0$.  Particularly, $x\rightarrow -\infty$ (resp. $x\rightarrow \infty$) means \[
	\mathbf{P}_x(X_{\zeta-}=-\infty, \zeta<\infty)>0\quad (\text{resp. }\mathbf{P}_x(X_{\zeta-}=\infty, \zeta<\infty)>0).
\]
For  $x\in \bR$, it follows from Blumenthal 0-1 law that 
\[
	e^\pm=\mathbf{P}_x(\sigma_{x\pm}=0)=0 \text{ or }1,
\]
where $\sigma_{x+}:=\inf\{t>0: X_t>x\}$ and $\sigma_{x-}:=\inf\{t>0:X_t<x\}$.  Hereafter following \cite{IM74, I06}, we classify the points in $\bR$ into several families. 

\begin{definition}\label{DEF21} 
A point $x\in\mathbb{R}$ is called
\begin{itemize}
\item[(1)] regular, {denoted by} $x\in \Lambda_2$, if $e^+=e^-=1$;
\item[(2)] singular, if $e^+e^-=0$;
\item[(3)] left singular, {denoted by} $x\in \Lambda_l$, if $e^+=0$; right singular, {denoted by} $x\in \Lambda_r$, if $e^-=0$;
\item[(4)] left shunt, {denoted by} $x\in \Lambda_{pl}$, if $e^+=0, e^-=1$; right shunt, {denoted by} $x\in \Lambda_{pr}$, if $e^-=0, e^+=1$;
\item[(5)] a trap, {denoted by} $x\in \Lambda_t$, if $e^+=e^-=0$.
\end{itemize}
The sets $\Lambda_2$, $\Lambda_l$, $\Lambda_r$, $\Lambda_{pl}$, $\Lambda_{pr}$ and $\Lambda_t$ stand for the subsets of $\mathbb{R}$ containing all the regular points, left singular points, right singular points, left shunt points, right shunt points and traps of $X$ respectively. The family of all singular points is $\Lambda_l\cup \Lambda_r$.
\end{definition}

Clearly, $\Lambda_2=\left(\Lambda_r\cup \Lambda_l\right)^c$, $\Lambda_{pr}\cap \Lambda_{l}=\emptyset$, $\Lambda_{pl}\cap \Lambda_{r}=\emptyset$ and $\Lambda_r\cap \Lambda_l=\Lambda_t$.  The following lemma concerning these families taken from \cite[Lemma~3.1]{L19},  is elementary to general 1-dimensional diffusions. 

\begin{lemma}\label{LM25}
\begin{itemize}
\item[(1)] Assume $a<b<c$. Then
\[
\begin{aligned}	&\mathbf{P}_a(\sigma_c<\infty)=\mathbf{P}_a(\sigma_b<\infty)\mathbf{P}_b(\sigma_c<\infty),\\
&\mathbf{P}_c(\sigma_a<\infty)=\mathbf{P}_c(\sigma_b<\infty)\mathbf{P}_b(\sigma_a<\infty).
\end{aligned}\]
\item[(2)] A point $b\in \Lambda_r$ (resp. $b\in \Lambda_l$) if and only if $\mathbf{P}_b(X_t\geq b, \forall t)=1$ (resp.  $\mathbf{P}_b(X_t\leq b, \forall t)=1$). Thus $b\in \Lambda_t$ if and only if $\mathbf{P}_b(X_t= b, \forall t)=1$.
\item[(3)] Fix $b\in \Lambda_r$ (resp. $b\in \Lambda_l$). Then for any $a>b$ (resp. $a<b$),
\[
	\mathbf{P}_a(X_t\geq b, \forall t)=1,\quad (\text{resp. }\mathbf{P}_a(X_t\leq b, \forall t)=1).
\]
\item[(4)] Fix $b\in \Lambda_{pr}$ (resp. $b\in \Lambda_{pl}$). Then there exists a point $a>b$ (resp. $a<b$) such that
\[
\mathbf{P}_b(\sigma_a<\infty)>0.
\]
\item[(5)] The left singular set $\Lambda_l$ is closed from the right, i.e. if $x_n\in \Lambda_l$ and $x_n\downarrow x$, $x\in \Lambda_l$. The right singular set $\Lambda_r$ is closed from the left, i.e. if $x_n\in \Lambda_r$ and $x_n\uparrow x$, $x\in \Lambda_r$.
\item[(6)] The regular set $\Lambda_2$ is open. Thus the singular set $\Lambda_r\cup \Lambda_l$ is closed.  
\item[(7)] If each point in an open interval $(a,b)$ is regular, i.e. $(a,b)\subset \Lambda_2$, then for any $x,y\in (a,b)$,
\[
	\mathbf{P}_x(\sigma_y<\infty)\mathbf{P}_y(\sigma_x<\infty)>0.
\]
\end{itemize}
\end{lemma}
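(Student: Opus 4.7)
The plan is to prove the seven parts in sequence, each feeding into the next. Parts (1)--(4) are essentially formal consequences of the strong Markov property, Blumenthal's 0--1 law, and path continuity, while parts (5)--(7) extract the topological structure of the classification.

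For (1), continuity forces any trajectory from $a$ to $c$ to hit $b$ first, so $\sigma_b\le\sigma_c$ and $X_{\sigma_b}=b$ on $\{\sigma_c<\infty\}$; the strong Markov property at $\sigma_b$ then yields the identity, and the second equation is symmetric. For the nontrivial direction of (2), I would set $T=\sigma_{b-}$, note $\mathbf{P}_b(T>0)=1$ by definition of $\Lambda_r$ and Blumenthal, and derive a contradiction on $\{T<\infty\}$: continuity forces $X_T=b$, so the strong Markov property gives $\mathbf{P}_b(\sigma_{b-}\circ\theta_T>0\mid\mathcal{F}_T)=1$, yet the definition of $T$ as the infimum of $\{t>0:X_t<b\}$ supplies times $t_k\downarrow T$ with $X_{t_k}<b$, forcing $\sigma_{b-}\circ\theta_T=0$. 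Part (3) reduces to (2) after hitting $b$, using that $X_t>b$ for $t<\sigma_b$ by continuity from $a>b$. Part (4) is a contradiction argument: if $\mathbf{P}_b(\sigma_a<\infty)=0$ for every $a>b$, a countable union over $a=b+1/k$ forces $\sup_t X_t\le b$, which combined with (2) traps $X$ at $b$ and contradicts $e^+=1$.

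For (5), given $x_n\downarrow x$ with $x_n\in\Lambda_l$ and any $y>x$, I would pick $x_n\in(x,y)$, apply the left analog of (2) at $x_n$ to obtain $\mathbf{P}_{x_n}(\sigma_y<\infty)=0$, and then use (1) to conclude $\mathbf{P}_x(\sigma_y<\infty)=0$; a countable union over $y=x+1/k$ yields $\sup_t X_t\le x$ a.s., i.e.\ $x\in\Lambda_l$. The main obstacle is (6): the one-sided closures in (5) and its dual do not cover the mixed accumulation $x_n\downarrow x$ with $x_n\in\Lambda_r\setminus\Lambda_l=\Lambda_{pr}$ approaching a putative regular $x$, and a direct probabilistic argument is delicate, since $e^+(x)=e^-(x)=1$ is a priori compatible with each $x_n$ being absorbing from below (the up-excursion that reaches $x_n$ simply gets stuck above $x_n$, without contradicting wiggling around $x$). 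The cleanest resolution is to appeal to the canonical scale--speed representation of a 1-dimensional diffusion from \cite{IM74,I06}, under which $\Lambda_2$ is characterised as the open set on which the canonical scale function is strictly increasing; openness is then automatic.

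Finally, (7) is an irreducibility statement on the regular subinterval $(a,b)\subset\Lambda_2$. Fixing $x<y$ in $(a,b)$, I would set $M_x:=\sup\{z>x:\mathbf{P}_x(\sigma_z<\infty)>0\}$. By (1) and the definition of the supremum, $\mathbf{P}_x(\sigma_z<\infty)>0$ for every $z\in(x,M_x)$. Suppose toward contradiction that $M_x<b$; then $M_x\in(a,b)\subset\Lambda_2$, and using the scale function of the restricted diffusion on $(a,b)$ (strictly increasing since $(a,b)\subset\Lambda_2$) together with the standard ratio formula $\mathbf{P}_x(\sigma_{M_x}<\sigma_{a'})>0$ for any $a'\in(a,x)$, one obtains $\mathbf{P}_x(\sigma_{M_x}<\infty)>0$. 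Applying the argument of (4) at $M_x$ then produces $z^*>M_x$ with $\mathbf{P}_{M_x}(\sigma_{z^*}<\infty)>0$, and (1) gives $\mathbf{P}_x(\sigma_{z^*}<\infty)>0$, contradicting the definition of $M_x$. Hence $M_x\ge y$ and $\mathbf{P}_x(\sigma_y<\infty)>0$; the symmetric argument handles $\mathbf{P}_y(\sigma_x<\infty)>0$.
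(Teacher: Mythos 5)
The paper itself gives no proof of this lemma; it is imported verbatim from the reference [L19, Lemma 3.1], so your argument can only be judged on its own merits. Parts (1)--(5) of your proposal are correct and essentially the standard arguments: the strong Markov property at $\sigma_b$ for (1), the restart-at-the-debut contradiction for (2), the reduction of (3) and (4) to (2), and the reduction of (5) to (1)--(2) along $y=x+1/k$ all go through.

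The genuine gap is in (6), and the same defect reappears in (7). For (6) you correctly isolate the dangerous case $\Lambda_{pr}\ni x_n\downarrow x$ with $x$ putatively regular, but your reason for abandoning the direct argument is mistaken, and the fallback you offer is not a proof. The direct argument does work: if $e^+(x)=1$, then a.s.\ there are times $t_j\downarrow 0$ with $X_{t_j}>x$; by continuity each such excursion hits some $x_{n_j}\in(x,X_{t_j})$ before time $t_j$, and part (2) applied at $x_{n_j}\in\Lambda_r$ together with the strong Markov property forces $X_t\ge x_{n_j}>x$ for all $t\ge t_j$. Letting $j\to\infty$ gives $X_t>x$ for all $t>0$, i.e.\ $e^-(x)=0$ and $x\in\Lambda_r$ --- so the up-excursion getting ``stuck above $x_n$'' is precisely what kills the wiggling below $x$; this is exactly the $\bar\Omega=\cap_n\Omega_n$ argument the paper runs in the proof of Lemma~\ref{LM1}~(2). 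Your alternative --- invoking a ``canonical scale--speed representation'' under which $\Lambda_2$ is the open set where a canonical scale function is strictly increasing --- is circular: for a non-regular diffusion the scale function is constructed only on the regular intervals, and identifying those intervals presupposes that $\Lambda_2$ is open. The same circularity infects (7): the ratio formula $\mathbf{P}_x(\sigma_{M_x}<\sigma_{a'})>0$ is derived in the literature under the standing hypothesis that the diffusion on $(a,b)$ is regular in the sense of irreducibility, which is precisely statement (7). Your $M_x$ skeleton is sound, but the missing step $\mathbf{P}_x(\sigma_{M_x}<\infty)>0$ must be obtained probabilistically: if $z\nrightarrow M_x$ for every $z<M_x$ (by (1) it suffices to rule this out), then by the Markov property at rational times $t$ with $X_t<M_x$ a path started at $M_x$ that once drops below $M_x$ never returns to $[M_x,\infty)$; combined with $e^-(M_x)=1$ this yields $X_t<M_x$ for all $t>0$ a.s., contradicting $e^+(M_x)=1$. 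Hence some, and then every, $z\in(x,M_x)$ reaches $M_x$, so $x\to M_x$, and $e^+(M_x)=1$ supplies $z^*>M_x$ with $M_x\to z^*$, the desired contradiction. With these two repairs the proof is complete.
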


Since $\Lambda_2$ is open due to Lemma~\ref{LM25}~(6),  we can write $\Lambda_2$ as a union of at most countably many disjoint open intervals as follows:
\begin{equation}\label{eq:lambda2}
	\Lambda_2=\bigcup_{n=1}^{N\leq \infty} (a_n,b_n). 
\end{equation}
In addition, we present another lemma which will be useful in proving our main results. Though it is elementary, we still give a proof for readers' convenience. 

\begin{lemma}\label{LM1}
\begin{itemize}
\item[(1)] Every regular point $x\in \Lambda_2$ is regular for itself, i.e. $\mathbf{P}_x(\sigma_x=0)=1$. 
\item[(2)] If $\Lambda_r\ni x_n\downarrow x\in \Lambda_{pr}$ (resp.  $\Lambda_l\ni x_n\uparrow x\in \Lambda_{pl}$), then
\begin{equation}\label{eq:31}
	\mathbf{P}_x(\sigma_x<\infty)=0. 
\end{equation}
\item[(3)] If $(a,b)\subset \Lambda_2$, $a\in \Lambda_{pr}$ and $x\nrightarrow a$ for all (or one) $x\in (a,b)$, then $\mathbf{P}_a(\sigma_a<\infty)=0$. 
\item[(4)] Assume that $X$ is symmetric with respect to a fully supported $\sigma$-finite measure $\xi$.  Then for any $x\in \Lambda_{r}$ (resp. $x\in \Lambda_l$),  it holds that $y\nrightarrow x$ for any $y<x$ (resp. $y>x$).  Particularly,  for $x\in \Lambda_t$,  it holds that $y\nrightarrow x$ for any $y\neq x$. 
\end{itemize}
\end{lemma}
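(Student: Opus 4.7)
\emph{Part (1).} For $x\in\Lambda_2$ the conditions $e^+=e^-=1$ combined with Blumenthal's $0$-$1$ law yield sequences $t_k,s_k\downarrow 0$ with $X_{t_k}>x>X_{s_k}$; continuity of paths and the intermediate value theorem then force $\sigma_x=0$ $\mathbf{P}_x$-a.s.

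\emph{Parts (2) and (3).} I would treat both uniformly. In part (2) select $\Lambda_r\ni x_n\downarrow x$; in part (3), having first upgraded ``$x_0\nrightarrow a$ for one $x_0\in(a,b)$'' to ``for every $x\in(a,b)$'' via Lemma~\ref{LM25}(1) and~(7), set $x_n=a+1/n\in(a,b)$ for $n$ large. By the intermediate value theorem, $\sigma_{x_n}$ is non-increasing in $n$; let $\sigma^\infty:=\lim_n\sigma_{x_n}\geq 0$. On $\{\sigma^\infty>0\}$, for every $s<\sigma^\infty$ one has $s<\sigma_{x_n}$ for all $n$, hence $X_s<x_n$ for all $n$, yielding $X_s\leq x$ (respectively $\leq a$); combined with $e^-=0$ this forces $X_s$ to equal $x$ (respectively $a$) on a right-neighborhood of $0$, contradicting $e^+=1$. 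Thus $\sigma^\infty=0$ $\mathbf{P}_x$-a.s.\ (respectively $\mathbf{P}_a$-a.s.). On the complement, the strong Markov property at each $\sigma_{x_n}$ combined with Lemma~\ref{LM25}(2) (in part (2)) or the upgraded hypothesis (in part (3)) yields $X_t\neq x$ (respectively $\neq a$) for all $t\geq\sigma_{x_n}$; since $\sigma_{x_n}\downarrow 0$, this gives $\sigma_x=\infty$ (respectively $\sigma_a=\infty$) a.s.

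\emph{Part (4).} This is the crux. Specializing the symmetry identity to $f=1_{[x,\infty)}$ and $g=1_{(-\infty,x)}$ produces
\[
  \int_{(-\infty,x)}\mathbf{P}_z(X_t\geq x)\,\xi(dz) = \int_{[x,\infty)}\mathbf{P}_z(X_t<x)\,\xi(dz),
\]
whose right side vanishes by Lemma~\ref{LM25}(2)--(3) because $x\in\Lambda_r$. Hence for every rational $t>0$ and $\xi$-a.e.\ $z<x$ one has $\mathbf{P}_z(X_t\geq x)=0$. The strong Markov property at $\sigma_x$ together with Lemma~\ref{LM25}(2) gives $\mathbf{P}_z(X_t\geq x)\geq \mathbf{P}_z(\sigma_x\leq t)$, so $\mathbf{P}_z(\sigma_x<\infty)=0$ for $\xi$-a.e.\ $z<x$ after sending $t\to\infty$ along rationals. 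The main obstacle is promoting this $\xi$-a.e.\ conclusion to a pointwise one: full support of $\xi$ furnishes, for every $y<x$, some $z\in(y,x)$ with $z\nrightarrow x$, and Lemma~\ref{LM25}(1) then gives $\mathbf{P}_y(\sigma_x<\infty)=\mathbf{P}_y(\sigma_z<\infty)\mathbf{P}_z(\sigma_x<\infty)=0$. The case $x\in\Lambda_l$ is entirely symmetric, and the statement for $x\in\Lambda_t=\Lambda_r\cap\Lambda_l$ follows by applying both.
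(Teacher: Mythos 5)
Your proof is correct. For parts (2) and (3) you take essentially the same route as the paper: apply the strong Markov property at the passage times $\sigma_{x_n}$ of a sequence decreasing to $x$ (resp.\ to $a$), use that from $x_n$ the process cannot return below $x_n$ (Lemma~\ref{LM25}~(2)) resp.\ cannot reach $a$ (the hypothesis $x_n\nrightarrow a$), and combine with $\sigma_{x_n}\downarrow 0$; your unified packaging via $\sigma^\infty$ and the contradiction between $X_s\equiv x$ near $0$ and $e^+=1$ is a mild repackaging of the paper's direct use of $\sigma_{x+}=0$, and your explicit upgrade from ``one $x$'' to ``all $x\in(a,b)$'' via Lemma~\ref{LM25}~(1) and~(7) fills in a step the paper leaves implicit. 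Part (1) is where you genuinely diverge: the paper embeds $x$ in a component $(a,b)\subset\Lambda_2$ and invokes the theory of regular diffusions (polar sets are empty and (H) holds for the part process, so $\{x\}$ cannot be thin), whereas you give a direct, more elementary crossing argument: $e^+=e^-=1$ produce arbitrarily small times at which the path is strictly above and strictly below $x$, and continuity with the intermediate value theorem force $\sigma_x=0$. This buys self-containedness at no cost. For part (4) the paper simply cites \cite[Lemma~3.2]{L19}; your argument --- testing the symmetry identity on $f=1_{[x,\infty)}$ and $g=1_{(-\infty,x)}$, annihilating the right-hand side by Lemma~\ref{LM25}~(2)--(3), bounding $\mathbf{P}_z(\sigma_x\leq t)\leq\mathbf{P}_z(X_t\geq x)$ by the strong Markov property at $\sigma_x$, and then promoting the $\xi$-a.e.\ statement to a pointwise one through full support of $\xi$ and the factorization in Lemma~\ref{LM25}~(1) --- is sound and supplies a proof where the paper defers to a reference. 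I see no gaps.
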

\begin{proof}
\begin{itemize}
\item[(1)] Since $\Lambda_2$ is open, there exists an interval $(a,b)\subset \Lambda_2$ such that $x\in (a,b)$.  Then the part process $\hat{X}$ of $X$ on $(a,b)$ (see, e.g., \eqref{eq:hatXt}) is a regular diffusion as in \cite[(45.1)]{RW87}.  This implies that (H) holds for $\hat{X}$ and the polar set for $\hat{X}$ must be empty. Particularly, $\{x\}$ is not thin and we must have $\mathbf{P}_x(\sigma_x=0)=1$.  
\item[(2)] We only consider the case $\Lambda_r\ni x_n\downarrow x\in \Lambda_{pr}$ and the other one is analogical.  Set $\sigma_n:=\sigma_{x_n}$.  For any $t\geq 0$ and $n\geq 1$, set $F(s,\omega):=1_{(-\infty, x_n)}(X_{t-s}(\theta_s \omega))$ for $0\leq s<t$ and $\omega\in \Omega$, and $F(s,\omega):=0$ otherwise. Then it follows from the strong Markovian property that
\[
\begin{aligned}
	\mathbf{P}_x\left(X_t<x_n, t> \sigma_n \right)& =\mathbf{E}_x\left(F(\sigma_n, \theta_{\sigma_n}\omega), t> \sigma_n \right) \\
	&=\mathbf{E}_x \left(\mathbf{E}_x\left(F(\sigma_n, \theta_{\sigma_n}\omega) |\mathcal{F}_{\sigma_n}\right), t> \sigma_n \right) \\
	&=\mathbf{E}_x \left(\mathbf{E}_{x_n}\left(F(s, \omega)\right)|_{s=\sigma_n}, t> \sigma_n \right).
\end{aligned}\]
Note that $\mathbf{P}_{x_n}(X_t<x_n)=0$ by means of Lemma~\ref{LM25}~(2).  We have for any $s< t$,
\[
	\mathbf{E}_{x_n}\left(F(s, \omega)\right)= \mathbf{P}_{x_n}(X_t<x_n)=0. 
\]
This yields $\mathbf{P}_x\left(X_t<x_n, t> \sigma_n \right)=0$.  Hence
\[
	\mathbf{P}_x\left(\bigcap_{t\in \mathbf{Q}_+}\left\{X_t\geq x_n\text{ or }t\leq \sigma_n \right\} \right)=1
\]
where $\mathbf{Q}_+$ is the family of all non-negative rational numbers.  Since all paths of $X$ are continuous,  one can easily obtain 
\[
	\mathbf{P}_x\left(\bigcap_{t\geq 0}\left\{X_t\geq x_n\text{ or }t\leq \sigma_n \right\} \right)=1.
\]
Define $\Omega_n:=\{\omega\in \Omega: \forall t\geq 0, X_t\geq x_n \text{ or }t\leq \sigma_n\}$, which is of probability $1$, i.e. $\mathbf{P}_x(\Omega_n)=1$.  Set $\Omega_0:=\{\omega: \sigma_{x+}=0\}$ and $\bar{\Omega}:=\cap_{n\geq 0}\Omega_n$.  Clearly, $\mathbf{P}_x(\bar\Omega)=1$.  To obtain \eqref{eq:31}, it suffices to show $\sigma_x(\omega)=\infty$ for any $\omega\in \bar{\Omega}$.  Indeed, fix $\omega\in \bar{\Omega}$.  Since $\sigma_{x+}=0$, there exists a constant $t_0>0$ (may depend on $\omega$) such that $X_t(\omega)>x$ for $t\in (0,t_0]$. In addition, $x_n\downarrow x$ tells us that there exists $n$ such that $x_n<X_{t_0}(\omega)$ and thus $\sigma_n(\omega)<t_0$.  It follows from $\omega\in \Omega_n$ that for any $t\geq t_0>\sigma_n(\omega)$,  $X_t(\omega)\geq x_n>x$.  Therefore $X_t(\omega)>x$ for all $t>0$, which leads to  $\sigma_x(\omega)=\infty$. 
\item[(3)] Take $(a,b)\ni x_n\downarrow x$ and set $\sigma_n:=\inf\{t>0: X_t\geq x_n\}$.  Then in the sense of $\mathbf{P}_a$-a.s.,  $\sigma_n\downarrow \sigma_{a+}=0$.  It follows that 
\[
	\mathbf{P}_a(\sigma_a<\infty)=\lim_{n\rightarrow \infty}\mathbf{P}_a(\sigma_a<\infty, \sigma_n\leq \sigma_a).
\]
Set $F(\omega):=1_{[0,\infty)}(\sigma_a(\omega))$ for any $\omega\in \Omega$. 
Since $\sigma_a\neq \sigma_n$ and $\sigma_a=\sigma_n +\sigma_a \circ \theta_{\sigma_n}$ on $\{\sigma_a>\sigma_n\}$,  we have by the strong Markovian property that
\[
\begin{aligned}
	\mathbf{P}_a(\sigma_a<\infty, \sigma_n\leq \sigma_a)&=\mathbf{E}_a\left(F(\theta_{\sigma_n} \omega), \sigma_n<\sigma_a \wedge \infty \right) \\
	&=\mathbf{E}_a\left(\mathbf{E}_a \left(F(\theta_{\sigma_n} \omega)|\mathcal{F}_{\sigma_n}\right),\sigma_n<\sigma_a \wedge \infty \right) \\
	&=\mathbf{E}_a \left(\mathbf{E}_{X_{\sigma_n}}(F), \sigma_n<\sigma_a \wedge \infty \right) \\
	&=\mathbf{P}_a \left(\sigma_n<\sigma_a \wedge \infty \right)\mathbf{E}_{x_n}(F).
\end{aligned}\]
Clearly $\mathbf{E}_{x_n}(F)=\mathbf{P}_{x_n}(\sigma_a<\infty)=0$ due to $x_n\nrightarrow a$.  Eventually we can conclude that $\mathbf{P}_a(\sigma_a<\infty)=0$. 
\item[(4)] See \cite[Lemma~3.2]{L19}. 
\end{itemize}
\end{proof}

\section{Hunt's hypothesis for general 1-dimensional diffusions}\label{SEC0}

At first, we present two simple examples where (H) fails. 

\begin{example}\label{Exa0}
\begin{itemize}
\item[(1)] The simplest example is the process of uniform drift, i.e. $Y_t=x+t$, $\mathbf{P}_x$-a.s., for any $x\in \bR$.  In this example, all points in $\bR$ are right shunt, and clearly, (H) fails.

To be more general,  we claim that if a right accumulation shunt point $x\in \Lambda_{pr}$ can be reached from the left, i.e.  $\Lambda_{pr}\ni x_n\downarrow x\in \Lambda_{pr}$ and $y\rightarrow x$ for some $y<x$,  then (H) fails.  Note that every point $x\in \bR$ for $Y$ is such a shunt point.  Indeed,  Lemma~\ref{LM1}~(2) implies that $\{x\}$ is not regular for itself and thus $\{x\}$ is thin. However $\{x\}$ is not polar due to $y\rightarrow x$. 
\item[(2)] Let $Y^0$ be a diffusion on $\bR$ consisting of two distinct components. The restriction of $Y^0$ to $(-\infty, 0)$ is an absorbing Brownian motion with $0$ being the absorbing boundary, and the restriction to $[0,\infty)$ is a $d$-Bessel process with $d\geq 2$, i.e. $|B_t|$ for a $d$-dimensional Brownian motion $B_t$. 

By a Ikeda-Nagasawa-Watanabe piecing-out construction with the instantaneous distribution $\delta_0$, i.e. the Dirac measure at $0$, as formulated in \cite{INW66},  all paths killed upon hitting $0$ (of $Y^0$) can be resurrected at $0$.  We eventually obtain a conservative diffusion process $Y$ on $\bR$ and particularly $y\rightarrow 0$ for any $y<0$.  Note that $0$ is a right shunt point, while $y\nrightarrow 0$ for any $y>0$.  Hence Lemma~\ref{LM1}~(3) indicates that $\{0\}$ is not regular for itself and thus thin.  However $\{0\}$ is not polar with respect to $Y$ because $y\rightarrow 0$ for $y<0$.  
\end{itemize}
\end{example}

The main result of this section will obtain an equivalent condition, essentially ruling out the two kinds of shunt points in Example~\ref{Exa0},  to Hunt's hypothesis (H).  Before stating it, we need to prepare some notations. 
Following \cite[\S3.5]{IM74},  we say $a$ and $b$ are in \emph{direct-communication} provided either $a\rightarrow b$ or $b\rightarrow a$.  If there is a finite chain of points $c_1,c_2$, etc, leading from $a$ to $b$,  such that $c_1$ is in direct-communication with $c_2$,  $c_2$ with $c_3$, etc,  then $a$ and $b$ are said to be in \emph{indirect communication}.  Clearly,  (indirect) communication induces an equivalent relation on $\bR$: each point communicates with itself; if $a$ and $b$ communicate,  then so do $b$ and $a$; if $a$ and $b$ communicate and also $b$ and $c$,  then so does $a$ and $c$.  Therefore $\bR$ splits into \emph{non-communicating classes}:
\begin{equation}\label{eq:R}
	\bR=\bigcup_{k\in K} J_k,
\end{equation}
where $\{J_k:k\in K\}$ are disjoint,  the class $J_k$ is either a singleton or an interval, and there are at most countably many intervals in this split.  When $J_k$ is a singleton, clear it is $\{x\}$ for some $x\in \Lambda_t$.  Note that $J_k$ is an invariant set of $X$  in the sense that when starts from $J_k$, $X$ will stay in $J_k$ until the lifetime $\zeta$.  Hence the restriction of $X$ to $J_k$ is still a nice diffusion process. 
Let $\mathring{J}_k$ be the interior of $J_k$ and particularly $\mathring{J}_k=\emptyset$ when it is a singleton.  Set 
\begin{equation}\label{eq:RINGR}
	\mathring{\bR}:=\bigcup_{k\in K} \mathring{J}_k,
\end{equation}
which is a union of at most countably many disjoint open intervals.


\begin{theorem}\label{THM0}
Hunt's hypothesis (H) holds for $X$, if and only if  $\mathring{\bR}$ contains only reflecting shunt points in the sense that
\begin{equation}\label{eq:3}
	\mathring{\bR}\cap \Lambda_{pr}=\{x\in \mathring{\bR}\cap \Lambda_{pr}: x=a_n \text{ for some } (a_n,b_n)\text{ in \eqref{eq:lambda2} s.t. }(a_n,b_n)\ni y\rightarrow a_n\}
\end{equation}
and 
\begin{equation}\label{eq:4}
\mathring{\bR}\cap \Lambda_{pl}=\{x\in \mathring{\bR}\cap \Lambda_{pl}: x=b_n \text{ for some } (a_n,b_n)\text{ in \eqref{eq:lambda2} s.t. }(a_n,b_n)\ni y\rightarrow b_n\}.
\end{equation}
\end{theorem}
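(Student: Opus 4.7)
The plan is to prove the biconditional by a contrapositive for ``only if'' and a direct argument for ``if'', with both directions driven by determining which singletons $\{x\}$ can be simultaneously thin and non-polar. For the ``only if'' direction, assume (H) holds while, by symmetry, \eqref{eq:3} fails: there is some $x \in \mathring{\bR} \cap \Lambda_{pr}$ violating the right-hand side of \eqref{eq:3}. I would first argue $\{x\}$ is not polar: if it were, then running chains of direct-communication in $J_k$ using Lemma~\ref{LM25}(1) together with the fact that $x \in \Lambda_{pr}$ forbids any arrow $x \to y$ with $y < x$, one deduces $J_k \cap (-\infty, x) = \emptyset$, contradicting $x \in \mathring{J}_k$. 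Next show $\{x\}$ is thin, in two cases. If $x = a_n$ for some $(a_n, b_n) \subset \Lambda_2$ but $y \nrightarrow x$ for every $y \in (a_n, b_n)$, Lemma~\ref{LM1}(3) gives $\mathbf{P}_x(\sigma_x < \infty) = 0$. Otherwise $x$ is not a left endpoint of any component of $\Lambda_2$, so the closedness of $\Lambda_l$ from the right (Lemma~\ref{LM25}(5)) combined with $x \notin \Lambda_l$ yields a $\Lambda_l$-free right neighborhood of $x$, which forces $\Lambda_r \ni x_n \downarrow x$, and then Lemma~\ref{LM1}(2) finishes. Either way $\{x\}$ is thin and non-polar, contradicting (H).

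For the ``if'' direction, assume \eqref{eq:3} and \eqref{eq:4}. I would first observe that $\Lambda_{pr} \cup \Lambda_{pl}$ is at most countable: in $\mathring{\bR}$ it is contained in the countable family $\{a_n, b_n\}$ by the conditions, while outside $\mathring{\bR}$ every shunt point must be an extremal endpoint of its interval class $J_k$, because for $x \in \Lambda_{pr}$ Lemma~\ref{LM25}(4) together with Lemma~\ref{LM25}(1) forces $(x, a] \subset J_k$, so $x \notin \mathring{J}_k$ only when $x = \inf J_k$, and there are at most countably many classes. Let $B$ be any nearly Borel thin set. By Lemma~\ref{LM1}(1) and the trap property, $B$ avoids $\Lambda_2 \cup \Lambda_t$, hence $B \subset \Lambda_{pr} \cup \Lambda_{pl}$ is countable and every singleton $\{x\} \subset B$ is itself thin. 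For each such $x$ I would show $\{x\}$ is polar: if $x \in \mathring{\bR}$, the conditions make $x$ a reflecting endpoint, and the part-process argument from Lemma~\ref{LM1}(1) applied to the restriction on $[a_n, b_n)$ or $(a_n, b_n]$ renders $\{x\}$ regular for itself, contradicting thinness; if $x \in (\Lambda_{pr}\cup\Lambda_{pl})\setminus \mathring{\bR}$, say $x = \inf J_k$ with $x \in \Lambda_{pr}$, the same part-process reflecting-boundary argument would again make $\{x\}$ regular for itself if some $y \in J_k$ reached $x$, contradicting thinness, so no $y$ reaches $x$ and (since points outside $J_k$ cannot reach $x$ by class structure) $\{x\}$ is polar. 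Countable unions of polar sets being polar, $B$ is polar, and therefore so is any semi-polar set, giving (H).

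The step I expect to be the real obstacle is the boundary-reflection extension of Lemma~\ref{LM1}(1): a $\Lambda_{pr}$ endpoint of a regular interval that is reachable from inside must be regular for itself. I would handle it by promoting the part process on the half-closed interval (with the shunt endpoint adjoined) to a genuine regular $1$-dimensional diffusion in the Itô--McKean sense and invoking the free Hunt property inside that enlarged regular domain. A secondary care point is the chain-to-direct-reachability step in the only-if direction, where Lemma~\ref{LM25}(1) paired with the one-sided directionality imposed by $x \in \Lambda_{pr}$ is the engine that lets indirect communication be traded for an actual arrow into $x$.
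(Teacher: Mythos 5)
Your proposal is correct and follows essentially the same route as the paper's proof: the only-if direction produces a thin but non-polar singleton via Lemma~\ref{LM1}~(2)/(3) together with the communication-class structure, and the if direction reduces a thin set to a countable set of shunt points sitting at endpoints of the classes $J_k$, each shown polar, with reachable reflecting endpoints handled by the regular part process on $[a_n,b_n)$. The one compression worth noting is in your final step: when right shunt points accumulate at $x=\inf J_k$ from the right, the part-process argument is not what excludes $y\rightarrow x$ --- there one uses Lemma~\ref{LM25}~(3) for $y>x$ and $x\nrightarrow x$ from Lemma~\ref{LM1}~(2), exactly as in the paper's case (r1').
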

\begin{proof}
We first show that if \eqref{eq:3} does not hold,  then (H) fails for $X$.  In fact, take $x\in \Lambda_{pr}\cap \mathring{J}_k$ for a certain $k\in K$.  There appear two possible cases:
\begin{description}
\item[\rm (r1)] $\exists \mathring{J}_k\cap \Lambda_{pr}\ni x_n \downarrow x$:  In this case,  it follows from Lemma~\ref{LM1}~(2) that $\mathbf{P}_x(\sigma_x=0)\leq \mathbf{P}_x(\sigma_x<\infty)=0$.  Thus $\{x\}$ is a thin set but $y\rightarrow x$ for some $\mathring{J}_k\ni y<x$ due to Lemma~\ref{LM25}~(2) and the definition of $\mathring{J}_k$.   This means  that $\{x\}$ is not polar, and (H) fails for $X$. 
\item[\rm (r2)] $x=a_n$ for some $n$ but $(a_n,b_n)\ni y\nrightarrow a_n$:  In this case Lemma~\ref{LM1}~(3) implies that $\{x\}$ is thin and similar to (r1), we have that $\{x\}$ is not polar. Hence (H) fails for $X$.  
\end{description}
 Analogically we can show that if \eqref{eq:4} does not hold, then (H) fails for $X$.  

To the contrary, it suffices to show that (H) holds for $X$ under the conditions \eqref{eq:3} and \eqref{eq:4}.  Take a thin set $A$.  Note that every subset of $A$ is also thin, and particularly $\{x\}$ is thin for any $x\in A$.   However, every singleton in $\Lambda_2$ or $\Lambda_t$ is not thin,  due to Lemma~\ref{LM1}~(1) and $\mathbf{P}_x(X_t=x,\forall t)=1$ for $x\in \Lambda_t$.  Hence $A\subset \Lambda_{pr}\cup \Lambda_{pl}$.  We claim that $A\cap \mathring{\bR}=\emptyset$. Argue with contradiction and suppose $x\in A\cap \Lambda_{pr}\cap \mathring{\bR}$. It follows from \eqref{eq:3} that $x=a_n$ for some $n$ and $(a_n,b_n)\ni y\rightarrow x$. Then the subprocess of $X$ killed upon leaving $[a_n,b_n)$ is a regular diffusion in the sense of \cite[(45.1)]{RW87}. Particularly $\{x\}$ is regular for itself.  This contradicts to that $\{x\}$ is thin.  The other case $x\in A\cap \Lambda_{pl}\cap \mathring{\bR}$ can be argued analogically.  Hence $A\cap \mathring{\bR}=\emptyset$ is verified, and it leads to that $A$ is a subset of the family of all closed endpoints of the intervals $J_k$ in \eqref{eq:R}.  Particularly, $A$ is a countable set.  Finally we only need to show that for any $x\in A$,  $\{x\}$ is polar.  To do this,  take $x\in A\cap J_k$ to be the left endpoint of $J_k$.  Then $x\in \Lambda_{pr}$ and $y\nrightarrow x$ for any $y<x$ by the definition of $J_k$.  There appear the following possible cases:
\begin{description}
\item[\rm (r1')] $\exists J_k\cap \Lambda_{pr}\ni x_n \downarrow x$: In this case,  $y\nrightarrow x$ for any $y>x$, because $y>x_n\in \Lambda_{pr}$ for some $n$, and $x\nrightarrow x$ due to Lemma~\ref{LM1}~(2).  In other words,  $\{x\}$ is polar.
\item[\rm (r2')] $x=a_n$ for some $n$ but $(a_n,b_n)\ni y\nrightarrow a_n$: Again $y\nrightarrow x$ for any $y>x$ and $x\nrightarrow x$ due to Lemma~\ref{LM1}~(3).  Hence $\{x\}$ is polar. 
\item[\rm (r3')] $x=a_n$ for some $n$ and $(a_n,b_n)\ni y\rightarrow a_n$: This case is impossible because the subprocess of $X$ killed upon leaving $[a_n,b_n)$ is a regular diffusion, and particularly $\{x\}$ is regular for itself, leading to a contradiction of that $\{x\}$ is thin. 
\end{description}
In a word, $\{x\}$ is polar.  The other case that $x\in A\cap J_k$ is the right endpoint of $J_k$ can be argued analogically.  Eventually we can conclude that $A$ is polar. Therefore (H) holds for $X$.  That completes the proof. 
\end{proof}
\begin{remark}
It is worth noting that the right shunt point $x$ in the case (r1) is the first kind of shunt points breaking (H) as illustrated in Example~\ref{Exa0}~(1), while that in the case (r2) is the second kind of shunt points breaking (H) as illustrated in Example~\ref{Exa0}~(2).  Hence the condition \eqref{eq:3} in fact rules out these two kinds of right shunt points. 
\end{remark}

\section{From Hunt's hypothesis to symmetrization outside $\Lambda_t$}\label{SEC4}

Let us turn to consider the symmetrization for $X$.  The following example, which motivates the current study,  shows that a non-symmetric diffusion on $\bR$ may satisfy (H).  

\begin{example}\label{Exa1}
Let $Y^0$ be a diffusion process on $\bR$ consisting of two distinct components: The restriction of $Y^0$ to $(-\infty, 0)$ is an absorbing Brownian motion ($0$ is the absorbing boundary) and the restriction to $[0,\infty)$ is a reflecting Brownian motion.  

By a Ikeda-Nagasawa-Watanabe piecing-out construction as in Example~\ref{Exa0}~(2), we can obtain a conservative diffusion process $Y$ on $\bR$, and particularly, for all $x<0$ and $y\geq 0$, $x\rightarrow y$ but $y\nrightarrow x$.  Clearly,  $0$ is a right shunt point and $Y$ is not symmetrizable due to Lemma~\ref{LM1}~(4).  However, (H) holds for $Y$ since a thin set must be empty. 
\end{example}

Generally, we take the following family of shunt points playing similar roles to $0$ in this example: 
\begin{equation}\label{definitionofLambdaa}
\Lambda_{ap}:=\{x\in \Lambda_{pl}\cup \Lambda_{pr}: \exists (a,b)\ni x\text{ s.t. }(a,x)\cup (x, b)\subset \Lambda_2 \text{ and } a\rightarrow x, b\rightarrow x\}. 
\end{equation}
In other words, every $x\in \Lambda_{ap}$ is not only a shunt point but also the common reachable endpoint of two regular intervals.  Note that given an interval $(a,b)$,  the part process $\hat{X}$ of $X$ on $(a,b)$ is defined as
\begin{equation}\label{eq:hatXt}
	\hat{X}_t:=\left\lbrace\begin{aligned}
	&X_t,\quad t<\tau:=\{t>0:X_t\notin (a,b)\}, \\
	&\Delta,\quad t\geq \tau. 
	\end{aligned} \right.
\end{equation}
Analogical to Example~\ref{Exa1},  the lemma below shows that (H) holds but the symmetrization fails near a point in $\Lambda_{ap}$. 

\begin{lemma}\label{LM2}
Let $x\in \Lambda_{ap}$ and $(a,b)$ be such an interval in \eqref{definitionofLambdaa}.  Then (H) holds but the symmetrization fails locally at $x$ in the sense that 
\begin{itemize}
\item[(1)] The part process $\hat{X}$ of $X$ on $(a,b)$ satisfies (H);
\item[(2)] $\hat{X}$ is not symmetrizable.  
\end{itemize}
\end{lemma}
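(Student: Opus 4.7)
We may assume WLOG $x\in \Lambda_{pr}$; the case $x\in \Lambda_{pl}$ is entirely symmetric.  Since $(a,x)\cup(x,b)\subset \Lambda_2$, the part process $\hat{X}$ inherits the classification $(a,x)\cup(x,b)\subset \Lambda_2^{\hat{X}}$ and $x\in \Lambda_{pr}^{\hat{X}}$; in particular $x$ is the only non-regular point of $\hat{X}$ in its state space $(a,b)$.

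For (1), I would invoke the interval analogue of Theorem~\ref{THM0} applied to $\hat{X}$ on $(a,b)$.  Since $x$ is the sole candidate shunt point, (H) for $\hat{X}$ reduces to showing either $x\notin \mathring{(a,b)}^{\hat{X}}$ or else the reflecting condition \eqref{eq:3}, namely $(x,b)\ni y\rightarrow x$ in $\hat{X}$.  I would establish the latter directly from $b\rightarrow x$ in $X$ via an accessibility argument.  Let $\tilde{Y}$ denote the part of $X$ on the regular interval $(x,b)$, which is a regular $1$-d diffusion in the sense of \cite[(45.1)]{RW87}.  If $x$ were inaccessible for $\tilde{Y}$, the strong Markov property would say that every excursion of $X$ starting from $b$ into $(x,b)$ exits through $b$ without touching $x$.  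Combined with $x\in \Lambda_r$ (so by Lemma~\ref{LM25}~(3) $X$ from any $y\geq x$ stays $\geq x$, and by continuity of paths $X$ from $b$ can approach $x$ only through $(x,b)$), this yields $\mathbf{P}_b(\sigma_x<\infty)=0$, contradicting $b\rightarrow x$.  Hence $x$ is accessible for $\tilde{Y}$, so $\tilde{Y}$ exits through $x$ with positive probability from every $y\in (x,b)$, which is equivalent to $y\rightarrow x$ in $\hat{X}$.

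For (2), I would argue by contradiction: suppose $\hat{X}$ is symmetric with respect to some fully supported $\sigma$-finite measure on $(a,b)$.  Since $x\in \Lambda_{pr}^{\hat{X}}\subset \Lambda_r^{\hat{X}}$, Lemma~\ref{LM1}~(4) applied to $\hat{X}$ would force $y\nrightarrow x$ in $\hat{X}$ for every $y<x$.  I would contradict this by establishing $(a,x)\ni y\rightarrow x$ in $\hat{X}$, via the direct analogue of the accessibility argument in (1) with $a\rightarrow x$ in place of $b\rightarrow x$ and the part of $X$ on $(a,x)$ in place of $\tilde{Y}$.

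The main obstacle is making the accessibility step rigorous: upgrading reachability of $x$ from the boundary point $a$ or $b$ in the full process $X$ to reachability from interior points in the regular part processes on $(a,x)$ or $(x,b)$.  This is precisely where the joint hypotheses $a\rightarrow x$ and $b\rightarrow x$ in the definition of $\Lambda_{ap}$ play their dual role: they produce reflecting behavior of $\hat{X}$ on both sides (yielding (H)) while simultaneously producing the asymmetric accessibility that Lemma~\ref{LM1}~(4) forbids (making symmetrization fail).
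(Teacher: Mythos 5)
Your proof is correct and follows essentially the same route as the paper: part (1) by invoking (the interval analogue of) Theorem~\ref{THM0} for $\hat{X}$, and part (2) by applying Lemma~\ref{LM1}~(4) to $\hat{X}$ and contradicting reachability of $x$ from $(a,x)$. The only difference is that you make explicit the accessibility upgrade --- from $a\rightarrow x$, $b\rightarrow x$ in $X$ to $y\rightarrow x$ in the part processes --- which the paper leaves implicit; your strong-Markov iteration argument for that step is sound.
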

\begin{proof}
\begin{itemize}
\item[(1)] This is clear by Theorem~\ref{THM0}.
\item[(2)] Mimicking the proof of \cite[Lemma~3.2]{L19}, we can conclude that if $\hat{X}$ is symmetric with respect to a certain fully supported $\sigma$-finite measure, then $y\nrightarrow x$ for any $y\in (a,x)$. This contradicts to the definition of $\Lambda_{ap}$. That completes the proof. 
\end{itemize}
\end{proof}

Denote $E:=\Lambda_t^c$, and define a subprocess $X^0$ of $X$ killed  upon hitting $\Lambda_t$:
\begin{equation}\label{eq:X0t}
X^0_t:=\left\lbrace 
\begin{aligned}
& X_t,\quad t<\zeta^0:=\zeta\wedge \sigma_{\Lambda_t}, \\
&\Delta, \quad t\geq \zeta^0.  \end{aligned}
\right.
\end{equation}
Denote the family of all fully supported $\sigma$-finite positive measures on $E$ (resp. $\bR$) by $\mathscr{M}_E$ (resp. $\mathscr{M}_\bR$). 
Now we have a position to state our main result. 

\begin{theorem}\label{THM1}
Let $X^0$ be the subprocess of $X$ on $E$ as above. Then the following are equivalent: 
\begin{itemize}
\item[(1)] $\Lambda_{ap}=\emptyset$ and (H) holds for $X$;
\item[(2)] $\Lambda_{ap}=\emptyset$ and ($\text{H}_\xi$) holds for $X$ with one (or equivalently all) $\xi\in \mathscr{M}_\bR$; 
\item[(3)] $X^0$ is symmetrizable on $E$, i.e. there exists $m^0\in \mathscr{M}_E$ such that $X^0$ is an $m^0$-symmetric diffusion process on $E$. 
\end{itemize}
\end{theorem}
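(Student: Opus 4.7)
The plan is to establish $(1)\Leftrightarrow(2)$ and $(1)\Leftrightarrow(3)$ separately. The implication $(1)\Rightarrow(2)$ is immediate since any polar set is $\xi$-polar. For $(2)\Rightarrow(1)$, I would replay the proof of Theorem~\ref{THM0} with $(\text{H}_\xi)$ in place of (H): in each of the bad cases (r1), (r2) of that proof, the thin singleton $\{x\}$ produced is accessible from a set $\{y:y\rightarrow x\}$ containing a non-degenerate open interval (of regular or shunt points in the relevant $\mathring{J}_k$), which has positive $\xi$-measure since $\xi$ is fully supported; this contradicts $\xi$-polarity of $\{x\}$. Thus both \eqref{eq:3} and \eqref{eq:4} must hold under $(\text{H}_\xi)$, and Theorem~\ref{THM0} yields (H).

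For $(3)\Rightarrow(1)$, I would invoke the classical fact that an $m^0$-symmetric Hunt process on $E$ automatically satisfies (H) on $E$. To lift this to $X$ on $\bR$: every trap $x$ is regular for itself since Lemma~\ref{LM25}(2) gives $\mathbf{P}_x(X_t=x,\forall t)=1$ and hence $\mathbf{P}_x(\sigma_x=0)=1$, so any thin set for $X$ is disjoint from $\Lambda_t$ and lies in $E$. For $A\subset E$ and $y\in E$, the events $\{\sigma_A<\infty\}$ under $X$ and under $X^0$ coincide, since a path of $X$ hitting $A\subset E$ must do so before being absorbed at $\Lambda_t$; for $y\in\Lambda_t$, $\mathbf{P}_y(\sigma_A<\infty)=0$ trivially. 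So (H) for $X^0$ transfers to (H) for $X$. The condition $\Lambda_{ap}=\emptyset$ follows from Lemma~\ref{LM2}(2): if $x\in\Lambda_{ap}$ lies in an interval $(a,b)$ as in \eqref{definitionofLambdaa}, then $(a,b)\subset E$ because $(a,x)\cup(x,b)\subset\Lambda_2$ and $x$ is a shunt point, so the part processes of $X$ and $X^0$ on $(a,b)$ coincide, and the $m^0$-symmetry of $X^0$ restricts to a symmetry of $\hat X$, contradicting Lemma~\ref{LM2}(2).

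The main direction is $(1)\Rightarrow(3)$. Assume (H) and $\Lambda_{ap}=\emptyset$. By Theorem~\ref{THM0}, every shunt point in $\mathring{\bR}$ is reflecting into some regular interval $(a_n,b_n)$ of \eqref{eq:lambda2}, and combined with $\Lambda_{ap}=\emptyset$ each such shunt point is a one-sided reflecting endpoint of exactly one such $(a_n,b_n)$. On each $(a_n,b_n)\subset\Lambda_2$ the part process is a regular diffusion in the sense of \cite[(45.1)]{RW87} and thus admits a canonical speed measure $m_n$, unique up to positive scaling; I would extend $m_n$ to include a point mass at any endpoint that is a reflecting shunt point for $(a_n,b_n)$, with the mass dictated by the local-time normalization coming from the piecing-out construction at that point. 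Since $X^0$ is invariant on each non-communicating class $J_k\cap E$, the scalings on distinct $n$'s can be chosen independently across classes, and I would define $m^0$ as the disjoint sum of these extended $m_n$ over all $n$.

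The main obstacle is verifying global $m^0$-symmetry of $X^0$, specifically the compatibility condition at reflecting shunt endpoints where the piecing-out construction glues the dynamics of two regions. Here the assumption $\Lambda_{ap}=\emptyset$ is decisive: it guarantees that every reflecting shunt point belongs to the closure of only one regular interval, so no inconsistency can arise from trying to match two independently normalized speed measures at the same point. With only one side contributing regular-diffusion dynamics, the detailed-balance identity at such a point reduces to the symmetry of the underlying regular diffusion on $[a_n,b_n)$ or $(a_n,b_n]$, which is classical; the global symmetry then follows because $X^0$ cannot cross between distinct non-communicating classes and the contributions combine additively in $m^0$.
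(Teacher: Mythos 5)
Your overall architecture (route $(1)\Leftrightarrow(2)$ through Theorem~\ref{THM0}, get the necessity of $\Lambda_{ap}=\emptyset$ from Lemma~\ref{LM2}, and assemble $m^0$ from speed measures of the regular pieces) matches the paper's, and your $(2)\Rightarrow(1)$ argument is sound: in cases (r1) and (r2) the set $\{y:y\rightarrow x\}$ contains an open interval by Lemma~\ref{LM25}(1), hence has positive $\xi$-measure. The genuine gap is in $(1)\Rightarrow(3)$. You never prove the structural fact on which the whole construction rests, namely that a right shunt point $a_n$ satisfies $y\nrightarrow a_n$ for \emph{every} $y<a_n$ (and symmetrically for left shunt points); this is what makes $I_n=\langle a_n,b_n\rangle$ invariant and turns $X^0$ into a disjoint union of regular diffusions. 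Your justification --- ``$\Lambda_{ap}=\emptyset$ guarantees that every reflecting shunt point belongs to the closure of only one regular interval'' --- is not what \eqref{definitionofLambdaa} gives: a shunt point can perfectly well lie in the closure of two regular intervals; what $\Lambda_{ap}=\emptyset$ excludes is being \emph{reachable} from both, and only when both sides are regular intervals abutting the point. The cases where the left side of $a_n$ is not a regular interval are untouched: (a) left singular points accumulating at $a_n$ from below (handled via Lemma~\ref{LM25}(3)); (b) $a_n=b_m$ for an adjacent regular interval but $(a_n,b_n)\ni z\nrightarrow a_n$, where one needs Lemma~\ref{LM1}(3) plus (H) --- not $\Lambda_{ap}=\emptyset$ --- to conclude $\{a_n\}$ is polar; (c) right shunt points accumulating at $a_n$ from below, which requires an inductive reduction to the previous cases through the right endpoints of the regular intervals squeezed in between. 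This case analysis is where (H) is actually consumed in the paper's proof. Relatedly, you do not establish that $\Lambda_{pr}\subset\{a_n\}$ and $\Lambda_{pl}\subset\{b_n\}$ (no shunt point is an accumulation of shunt points from its regular side, and there is no open interval of shunt points), without which $E=\bigcup_n\tilde I_n$ fails and your $m^0$ need not be fully supported on $E$.

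Two smaller points. First, in $(3)\Rightarrow(1)$ the fact you call classical yields only ($\text{H}_{m^0}$), i.e.\ semi-polar implies $m^0$-polar (via quasi-regularity and \cite[Theorem~3.1.10]{CF12}); full (H) for $X$ is not immediate from symmetry, and the paper recovers it only after the structural analysis shows $\mathring{\bR}$ contains no shunt points. You could repair this by routing $(3)\Rightarrow(2)\Rightarrow(1)$ with your own $(2)\Rightarrow(1)$. Second, there is no given piecing-out construction from which to read off a ``local-time normalization'' for an endpoint atom; the clean route is to note that the restriction of $X^0$ to the half-open interval $I_n$ (reflecting endpoint included) is itself a regular diffusion in the sense of \cite[(45.1)]{RW87}, so its speed measure --- endpoint atom and all --- is canonical and symmetrizing, and no gluing or compatibility condition ever arises.
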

\begin{proof}
\emph{(3)$\Rightarrow$(2).}  Suppose (3) holds.  If $x\in \Lambda_{ap}$, then $\hat{X}$ in \eqref{eq:hatXt} is still $m^0|_{(a,b)}$-symmetric due to the symmetry of $X^0$. This leads to a contradiction of Lemma~\ref{LM2}~(2).  Thus $\Lambda_{ap}=\emptyset$. On the other hand, the associated Dirichlet form of $X^0$ on $L^2(E,m^0)$ is necessarily quasi-regular by means of, e.g., \cite[Theorem~1.5.3]{CF12},  and particularly, ($\text{H}_{m^0}$) holds for $X^0$ due to \cite[Theorem~3.1.10]{CF12}.  Set $\xi|_{E}:=m^0$ and $\xi|_{E^c}:=dx|_{E^c}$.  We claim that ($\text{H}_\xi$) holds for $X$. Take a thin set $A$ with respect to $X$.  Since $\{x\}$ is not thin with respect to $X$ for any $x\in \Lambda_t$, it follows that $A\subset E$ and $A$ is also thin with respect to $X^0$.  This yields by ($\text{H}_{m^0}$) that $A$ is $m^0$-polar with respect to $X^0$. Therefore $A$ is $\xi$-polar with respect to $X$, because all points in $E^c$ are traps.  In other words, (2) is satisfied. 

\emph{(2)$\Rightarrow$(3).} Now suppose (2) holds.  
Set $K:=\Lambda^c_2$, which is closed.  The derivation of (3) will be completed in several steps.  


Firstly we claim that if an open interval $(a,b)\subset K$, then $(a,b)\subset \Lambda_t$.  Argue by contradiction and suppose $x\in (a,b)\cap \Lambda_{pr}$.  The other case $x\in (a,b)\cap \Lambda_{pl}$ can be treated analogically.  By Lemma~\ref{LM25}~(4, 5),  there exists a constant $\varepsilon>0$ such that 
\[
	(x,x+\varepsilon)\subset \left(\Lambda_{pr}\cup \Lambda_2\right)\cap K=\Lambda_{pr}
\]
and $x\rightarrow y$ for any $y\in (x,x+\varepsilon)$.  Then $\mathbf{P}_y(\sigma_y=0)\leq \bfP_y(\sigma_y<\infty)=0$ by Lemma~\ref{LM1}~(2) and thus $\{y\}$ is thin.  By (H${}_\xi$),  $\{x\}$ is also $\xi$-polar. However $z\rightarrow y$ for any $z\in (x,y)$ due to $x\rightarrow y$.  This leads to a contradiction. 

Secondly, we show that for any $x\in \Lambda_{pr}$ (resp. $x\in \Lambda_{pl}$),  $y\nrightarrow x$ for any $y<x$ (resp. $y>x$).  Only the case $x\in \Lambda_{pr}$ will be treated as below.  Note that $(x,x+\varepsilon)\subset \Lambda_{pr}\cup \Lambda_2$ for a certain $\varepsilon>0$.  When a sequence $x_n$ in $\Lambda_{pr}$ decreases to $x$,  it follows from Lemma~\ref{LM1}~(2) that $\{x\}$ is thin and thus $\xi$-polar by (H${}_\xi$).  Since $\xi$ is fully supported, it is easy to conclude that $y\nrightarrow x$ for any $y<x$.  Now we only need to consider the case that $x$ is the left endpoint of a regular interval in \eqref{eq:lambda2}, i.e. $x=a_n$ for a certain $1\leq n\leq N$.  The argument will be completed in various cases:
\begin{itemize}
\item[(a)] \emph{There exists a sequence $x_n\in \Lambda_l$ increasing to $x$. }
 In this case, for any $y<x$, take $y<x_n<x$. It follows from Lemma~\ref{LM25}~(3) that $\mathbf{P}_y(X_t\leq x_n,\forall t)=1$ and thus $y\nrightarrow x$.
\item[(b)] \emph{$(x-\varepsilon,x)\subset \Lambda_2$ for a certain $\varepsilon>0$. }
In this case, when $(x-\varepsilon,x)\ni y\rightarrow x$ and $(a_n,b_n)\ni z\rightarrow x$, we have $x\in \Lambda_{ap}$ contradicting to $\Lambda_{ap}=\emptyset$.  It suffices to show $y\nrightarrow x$ if $z\nrightarrow x$.  Now suppose $z\nrightarrow x$.  Then from Lemma~\ref{LM1}~(3), we know that $\mathbf{P}_a(\sigma_a=0)\leq \mathbf{P}_a(\sigma_a<\infty)=0$.  In other words, $\{a\}$ is thin and hence $\xi$-polar by (H${}_\xi$). Particularly, $y'\nrightarrow x$ for any $y'<x$. 
\item[(c)] \emph{$(x-\varepsilon,x)\subset \Lambda_{pr}\cup \Lambda_2$ for a certain $\varepsilon>0$ and there exists a sequence $x_n\in \Lambda_{pr}$ increasing to $x$. } 
For any $y<x$, take $y<x_n<x$.  Clearly a certain regular interval is contained in $(x_n,x)$, i.e.  $(a_m,b_m)\subset (x_n,x)$ for a certain $1\leq m\leq N$. Then $b_m\in \Lambda_{pr}$, and  $b_m$ is either the left endpoint of another regular interval or the decreasing limit of a sequence in $\Lambda_{pr}$.  For either case, we can obtain that $y\nrightarrow b_m$ analogically to (b) or the case $\Lambda_{pr}\ni x_n\downarrow x$.  As a consequence, $y\nrightarrow x$. 
\end{itemize}

Thirdly,  $\Lambda_{pr}\subset\{a_n: 1\leq n\leq N\}$ and $\Lambda_{pl}\subset \{b_n:1\leq n\leq N\}$.  In fact, take $x\in \Lambda_{pr}$.  If $x$ is not a left endpoint of a regular interval,  there must exist a sequence $\Lambda_{pr}\ni x_n\downarrow x$.  Then  $x\nrightarrow x_n$ for all $n$ due to the conclusion in the second step.  This contradicts to Lemma~\ref{LM25}~(4).  The case $x\in \Lambda_{pl}$ can be treated analogically. 

Fourthly set an interval $I_n:=\langle a_n,b_n\rangle$ for every $1\leq n\leq N$, where $a_n\in I_n$ (resp. $b_n\in I_n$) if and only if $a_n\in \Lambda_{pr}$ (resp. $b_n\in \Lambda_{pl}$) and $x\rightarrow a_n$ (resp. $x\rightarrow b_n$) for $x\in (a_n,b_n)$.  It is easy to figure out that $\{I_n\}$ are disjoint,  $I_n$ is an invariant set of $X^0$ in the sense that $\mathbf{P}_x(X^0_t\in I_n,\forall t<\zeta^0)=1$ for all $x\in I_n$, and the restriction $X^n$ of $X^0$ to $I_n$ is a regular diffusion in the sense of \cite[(45.1)]{RW87}.  Hence there exist a so-called scale function $s_n$ and a so-called speed measure $m_n$ characterizing $X^n$.  Particularly, $m_n$ is a fully supported Radon measure on $I_n$ and $X^n$ is symmetric with respect to $m_n$. 

Finally, set another interval $\tilde{I}_n:=\langle a_n,b_n\rangle$ for every $1\leq n\leq N$, where $a_n\in \tilde I_n$ (resp. $b_n\in \tilde I_n$) if and only if $a_n\in \Lambda_{pr}$.  Clearly $\{\tilde{I}_n\}$ are disjoint, and $\tilde{I}_n$ is still an invariant set of $X^0$. In addition,  $E=\cup_{1\leq n\leq N} \tilde{I}_n$ and $X^0$ consists of a sequence of distinct components, the restrictions $\tilde{X}^n$ to all $\tilde{I}_n$.  Extending $m_n$ to $\tilde I_n$ by imposing $m_n(\tilde{I}_n\setminus I_n)=0$, we can easily obtain that $\tilde{X}^n$ is still symmetric with respect to $m_n$. Set $m^0:=m_n$ on $\tilde{I}_n$ for all $1\leq n\leq N$.  Clearly $m^0\in \mathscr{M}_E$.  Eventually we can conclude that $X^0$ is an $m^0$-symmetric diffusion process on $E$. 

\emph{(3)$\Rightarrow$(1).}  $\Lambda_{ap}=\emptyset$ can be obtained analogically to the proof of \emph{(3)$\Rightarrow$(2)}.  On the other hand, from the derivation of \emph{(2)$\Rightarrow$(3)}, we know that $$\mathring{\bR}=\cup_{n=1}^N (a_n,b_n)$$ contains no shunt points.  Hence (H) holds for $X$ by virtue of Theorem~\ref{THM0}. 


\emph{(1)$\Rightarrow$(2)} is obvious.  The proof is eventually completed. 
\end{proof}
\begin{remark}
We need to point out that $\Lambda_t$ is (nearly) Borel measurable so that $\sigma_{\Lambda_t}$ is well defined in \eqref{eq:X0t}.  In fact,  by the argument of (2)$\Rightarrow$(3),  one can see that in our consideration,  $E=\cup_{1\leq n\leq N}\tilde{I}_n$, where $\tilde{I}_n$ are intervals.  Hence $\Lambda_t=E^c$ is obviously Borel measurable.  
\end{remark}

From this proof, one can find that under each condition of Theorem~\ref{THM1},  $X^0$ is, in fact, a distinct union of regular diffusions $X^n$ on $I_n=\langle a_n, b_n\rangle$, obtained by adding reachable shunt endpoints to the regular interval $(a_n,b_n)$. There may appear entrance boundary points in $\tilde{I}_n\setminus I_n$, where $\tilde{I}_n$ contains all shunt endpoints of $(a_n,b_n)$, for $X^n$, while $\tilde{I}_n\setminus I_n$ is polar and thus can be essentially ignored.  The endpoint $a_n$ (resp. $b_n$) is called \emph{exit} for $X^n$ provided
\[
	a_n\notin I_n,  I_n \ni x\rightarrow a_n,\quad \text{ (resp.  }b_n\notin I_n,  I_n \ni x\rightarrow b_n). 
\]
We emphasize that a finite exit boundary point $a_n$ or $b_n$ of $X^n$ must be a trap of $X$, i.e. $a_n\in \Lambda_t$ or $b_n\in \Lambda_t$. 

\begin{corollary}
If $a_n$ (resp. $b_n$) is finite exit for $X^n$, then $a_n\in \Lambda_t$ (resp. $b_n\in \Lambda_t$). 
\end{corollary}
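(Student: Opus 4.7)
The approach is a case analysis on the singular type of $a_n$, using the structural classification of $\Lambda_{pr}$ and $\Lambda_{pl}$ that was already derived under the symmetrizability hypothesis in force. Since $(a_n,b_n)$ is a maximal open component of $\Lambda_2$ in \eqref{eq:lambda2}, the endpoint $a_n$ is singular, so $a_n\in\Lambda_{pr}\cup\Lambda_{pl}\cup\Lambda_t$; it therefore suffices to exclude the two shunt possibilities under the hypothesis that $a_n$ is a finite exit boundary for $X^n$.

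First I would dispose of $a_n\in\Lambda_{pr}$ by pure bookkeeping with the definition of $I_n$. Recall from the fourth step of the proof of \emph{(2)$\Rightarrow$(3)} that $a_n\in I_n$ precisely when $a_n\in\Lambda_{pr}$ together with $x\to a_n$ for some (equivalently every) $x\in(a_n,b_n)$. Hence if $a_n\in\Lambda_{pr}$, exactly one of the two clauses comprising the finite exit condition, namely $a_n\notin I_n$ and $I_n\ni x\to a_n$, is already violated, which is a contradiction.

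Next I would exclude $a_n\in\Lambda_{pl}$ by invoking the second step in the proof of \emph{(2)$\Rightarrow$(3)}, which establishes (under the symmetrizability hypothesis guaranteed by Theorem~\ref{THM1}) that every left shunt point $x$ satisfies $y\nrightarrow x$ for all $y>x$. Applied to $x=a_n$, this forces $x\nrightarrow a_n$ for every $x\in(a_n,b_n)\subset I_n$, again contradicting the exit requirement $I_n\ni x\to a_n$. Combining the two cases yields $a_n\in\Lambda_t$, and the argument for a finite exit right endpoint $b_n$ is entirely symmetric with the roles of $\Lambda_{pr}$ and $\Lambda_{pl}$ interchanged.

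The main subtlety, more than a genuine obstacle, is to make transparent that the shunt-point classification used above is exactly that of Theorem~\ref{THM1}: the corollary tacitly assumes the symmetrizable setting, since the decomposition $X^0=\bigsqcup X^n$ itself is only constructed there. No additional computation or deeper point-classification is needed beyond what is already in the toolbox assembled in \S\ref{SEC4}.
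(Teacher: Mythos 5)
Your proposal is correct and follows essentially the same route as the paper's own proof: rule out $a_n\in\Lambda_{pr}$ via the definition of $I_n$ (since $x\to a_n$ would force $a_n\in I_n$, contradicting $a_n\notin I_n$), and rule out $a_n\in\Lambda_{pl}$ via the second step of the proof of (2)$\Rightarrow$(3) of Theorem~\ref{THM1}, which gives $x\nrightarrow a_n$ for $x>a_n$. The only addition is your explicit remark that $a_n$ is singular because $(a_n,b_n)$ is a maximal component of $\Lambda_2$, which the paper leaves implicit.
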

\begin{proof}
We only treat the case of $a_n$.  Argue by contradiction. If $a_n\in \Lambda_{pr}$, then $x\rightarrow a_n$ leads to $a_n\in I_n$ by the definition of $I_n$, which contradicts to $a_n\notin I_n$.  If $a_n\in \Lambda_{pl}$, then the second step in the proof of (2)$\Rightarrow$(3) of Theorem~\ref{THM1} tells us that $x\nrightarrow a_n$, also leading to a contradiction.  Therefore we must have $a_n\in \Lambda_t$. 
\end{proof}

When $a_n$ or $b_n$ is finite exit for $X^n$,  
\[
	\hat I_n:= I_n\cup \{a_n\} \text{ or }I_n\cup \{b_n\}
\] is also an invariant set of $X$ and the restriction $\hat{X}^n$ of $X$ to $\hat{I}_n$ is a nice diffusion process. But $\hat{X}^n$ is not symmetrizable due to $a_n\nrightarrow x\in I_n$ or $b_n\nrightarrow x\in I_n$.  This is the reason why we only consider the subprocess $X^0$ obtained by killing upon hitting $\Lambda_t$ in Theorem~\ref{THM1}.  A simple example blew illustrates this fact. 

\begin{example}\label{Exa2}
Let $Y^0$ be a diffusion process on $[0, \infty)$, consisting of two distinct components: The restriction of $Y^0$ to $(0,\infty)$ is an absorbing Brownian motion, killed upon hitting $0$, and the restriction to $\{0\}$ is the trivial process $Y^0_t\equiv 0$ for all $t\geq 0$.  
By a Ikeda-Nagasawa-Watanabe piecing-out construction with the instantaneous distribution $\delta_0$ like that in Example~\ref{Exa1},  we can obtain a conservative diffusion process $Y$ on $[0,\infty)$, and particularly,  $x\rightarrow 0$ for all $x>0$ but $0$ is a trap. 

Let $\zeta^{Y^0}$ and $\zeta^Y$ be the lifetimes of $Y^0$ and $Y$ respectively. 
We should emphasize that when $\zeta^{Y^0}<\infty$,  $Y^0_{\zeta^{Y^0}-}=0$ while $Y^0_t=\Delta$, the ceremony,  for all $t\geq \zeta^{Y^0}$.  As a comparison $\zeta^Y$ is always infinite,  i.e. $\zeta^Y=\infty$, and when $t\geq \sigma^Y_0:=\inf\{t>0:Y_t=0\}$,  it holds $Y_t=0$. 

The subprocess of $Y$ killed upon hitting $0$, i.e. the absorbing Brownian motion on $(0,\infty)$,  is obviously symmetric with respect to the Lebesgue measure.  Nevertheless, $Y$ is not symmetrizable, since the symmetry of $Y$ must imply that all traps are unreachable by other points, which is a contradiction of $x\rightarrow 0$ for $x>0$; see Lemma~\ref{LM1}~(4). 
\end{example}

\section{Symmetrization on $\bR$}\label{SEC5}

This section is devoted to exploring when $X$ is symmetrizable on $\bR$.  In Example~\ref{Exa2}, we show an example that when a finite exit boundary point appears for certain $X^n$,  $X$ is symmetrizable only outside $\Lambda_t$ but not on $\bR$.  To state a general result, set 
\[
	\Lambda_{at}:=\{x\in \Lambda_t: \exists y\neq x \text{ s.t. }y\rightarrow x\}
\]
to be the family of all traps that can be reached by other points.  The following theorem claims that on basis of the symmetrization outside $\Lambda_t$, $X$ is symmetrizable on $\bR$, if and only if there appear no finite exit boundary points. 

\begin{theorem}\label{THM2}
The following are equivalent:
\begin{itemize}
\item[(1)] $\Lambda_{ap}=\Lambda_{at}=\emptyset$ and (H) holds for $X$;
\item[(2)] $\Lambda_{ap}=\Lambda_{at}=\emptyset$ and ($H_\xi$) holds for $X$ with one (or equivalently all) $\xi\in \mathscr{M}_\bR$; 
\item[(3)] $X$ is symmetrizable on $\bR$, i.e. there exists $m\in \mathscr{M}_\bR$ such that $X$ is an $m$-symmetric diffusion process on $\bR$. 
\end{itemize}
\end{theorem}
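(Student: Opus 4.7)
The strategy is to bootstrap Theorem~\ref{THM1} (which characterizes symmetrization of $X^0$ on $E = \Lambda_t^c$) up to the full process $X$ on $\bR$, exploiting that $\Lambda_{at} = \emptyset$ decouples the trap set from the rest of the state space. The implication $(1)\Rightarrow(2)$ is automatic since (H) implies ($\text{H}_\xi$) for every $\xi$, so the real content lies in $(3)\Rightarrow(1)$ and $(2)\Rightarrow(3)$.

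For $(3)\Rightarrow(1)$, suppose $X$ is $m$-symmetric. Since $\Lambda_t$ is Borel (as remarked after Theorem~\ref{THM1}), the part process $X^0$ obtained by killing $X$ upon hitting $\Lambda_t$ is $m|_E$-symmetric on $E$, with $m|_E \in \mathscr{M}_E$. Theorem~\ref{THM1} then yields both $\Lambda_{ap} = \emptyset$ and Hunt's hypothesis (H) for $X$. To see $\Lambda_{at} = \emptyset$, I invoke Lemma~\ref{LM1}(4): $m$-symmetry forces $y \nrightarrow x$ for every $y \neq x$ whenever $x \in \Lambda_t$, so no trap is reachable from another point.

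For $(2)\Rightarrow(3)$, Theorem~\ref{THM1} first produces $m^0 \in \mathscr{M}_E$ making $X^0$ symmetric on $E$. The assumption $\Lambda_{at} = \emptyset$ means no trap can be reached from a different point, hence $\mathbf{P}_x(\sigma_{\Lambda_t} < \infty) = 0$ for every $x \in E$; equivalently, $X$ started from $E$ agrees path-by-path with $X^0$ up to the lifetime. From any $x \in \Lambda_t$, $X_t \equiv x$ by Lemma~\ref{LM25}(2). Now I pick any fully supported $\sigma$-finite measure $\mu$ on $\Lambda_t$ (such a measure exists since $\Lambda_t$ is a Borel subset of $\bR$) and set $m := m^0 + \mu \in \mathscr{M}_\bR$. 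For positive Borel $f, g$, splitting the state space into $E$ and $\Lambda_t$ gives
\[
\int_{\bR} P_t f \cdot g \, dm = \int_E P^0_t(f|_E) \cdot g|_E \, dm^0 + \int_{\Lambda_t} f g \, d\mu,
\]
using $P_t f(x) = P^0_t(f|_E)(x)$ for $x \in E$ (trajectories never enter $\Lambda_t$, so values of $f$ on $\Lambda_t$ are irrelevant) and $P_t f(x) = f(x)$ for $x \in \Lambda_t$. Both summands are symmetric in $(f, g)$, the first by $m^0$-symmetry of $X^0$ and the second trivially, so $X$ is $m$-symmetric.

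The main delicate step is justifying the decomposition $P_t f(x) = P^0_t(f|_E)(x)$ for $x \in E$: this requires that paths of $X$ started in $E$ remain in $E$ for all times before $\zeta$, which follows from $\Lambda_{at} = \emptyset$ together with path continuity (any excursion into $\Lambda_t$ would force some $y \in E$ to satisfy $y \to x$ for some trap $x$, contradicting $\Lambda_{at} = \emptyset$). Once this decoupling is in place, the rest is routine integration, and the specific choice of $\mu$ on $\Lambda_t$ is immaterial provided it is fully supported and $\sigma$-finite.
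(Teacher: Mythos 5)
Your argument is correct and follows essentially the same route as the paper: both reduce the statement to Theorem~\ref{THM1}, use Lemma~\ref{LM1}~(4) to dispose of $\Lambda_{at}$, and build the symmetrizing measure on $\bR$ by extending the symmetrizing measure of $X^0$ with an arbitrary fully supported $\sigma$-finite measure on the leftover set (the paper takes Lebesgue measure on $\left(\cup_{n} I_n\right)^c$, you take $\mu$ on $\Lambda_t$). The one point you gloss over is that $\Lambda_t$ may be uncountable, so $\mathbf{P}_x(\sigma_{\Lambda_t}<\infty)=0$ for $x\in E$ does not follow from $\Lambda_{at}=\emptyset$ by a na\"ive union bound over traps; it does follow either from path continuity (the first point of $\Lambda_t$ a continuous path from $x$ meets must be the nearest $\Lambda_t$-point on one side of $x$, which is itself a trap and hence unreachable), or from the structure $E=\cup_{1\leq n\leq N}\tilde{I}_n$ already established in the proof of Theorem~\ref{THM1}.
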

\begin{proof}
\emph{(2)$\Rightarrow$(3)}.  From the argument of (2)$\Rightarrow$(3) in the proof of Theorem~\ref{THM1},  we can figure out that $X^n$, the restriction of $X$ to $I_n$,  is symmetric with respect to $m_n$.  Set a $\sigma$-finite measure $m$ on $\bR$ as follows: $m|_{I_n}:=m_n$ for all $1\leq n\leq N$ and $$m|_{\left(\cup_{1\leq n\leq N}I_n\right)^c}:=dx|_{\left(\cup_{1\leq n\leq N}I_n\right)^c}.$$
Clearly $m$ is fully supported. Let $(P_t)$ and $(P^n_t)$ be the semigroups of $X$ and $X^n$ respectively.  It suffices to show $(P_t)$ is $m$-symmetric. Note that for any positive Borel measurable function $f$ on $\bR$, 
\[
	P_tf(x)=\left\lbrace \begin{aligned}
	 & P^n_t(f|_{I_n})(x),\quad x\in I_n,  1\leq n\leq N,\\
	 &f(x),\quad x\in \Lambda_t,
	\end{aligned} \right.
\]
and $N:= \left(\left(\cup_{1\leq n\leq N}I_n\right) \cup \Lambda_t \right)^c=\cup_{1\leq n\leq N} (\tilde{I}_n\setminus I_n)$ is of zero $m$-measure.  By virtue of the symmetry of $P^n_t$ with respect to $m_n$, it is straightforward to verify that $(P_t)$ is symmetric with respect to $m$. 

\emph{(3)$\Rightarrow$(1)}.  Mimicking the proof of Theorem~\ref{THM1}, we only need to show the additional condition $\Lambda_{at}=\emptyset$.  In fact, this is a consequence of Lemma~\ref{LM1}~(4). 

\emph{(1)$\Rightarrow$(2)}. This is obvious. The proof is eventually completed. 
\end{proof}
\begin{remark}
When the equivalent conditions in Theorem~\ref{THM2} hold,  the possible exit boundary point for $X^n$ has to be $\infty$ or $-\infty$. 
\end{remark}

It is worth noting that a regular diffusion on an interval has unique symmetric measures up to a constant, see, e.g., \cite{YZ10}.  In other words,  the family of all $\sigma$-finite symmetric measures of $X^n$ is $\{c\cdot m_n: c>0\}$.  As a consequence, one can figure out the following expression of all symmetric measures of $X$.  The proof is trivial and we omit it. 

\begin{corollary}\label{COR1}
Under each of the equivalent conditions in Theorem~\ref{THM2},  the family of all fully supported $\sigma$-finite symmetric measures of $X$ is
\[
\mathcal{M}=\left\{m\in \mathscr{M}_\bR: \exists c_1,\cdots, c_N>0\text{ s.t. }m|_{I_n}=c_n\cdot m_n\text{ for }1\leq n\leq N\right\}.
\]
\end{corollary}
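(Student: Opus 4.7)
The plan is to exploit two ingredients already secured earlier: the decomposition of $X$ into a disjoint union of regular 1-dimensional diffusions $X^n$ on the invariant sets $I_n=\langle a_n,b_n\rangle$, each symmetric with respect to its speed measure $m_n$, as established in the proof of Theorem~\ref{THM2}; and the uniqueness-up-to-constant of symmetrizing measures for a regular 1-dimensional diffusion, as recalled from \cite{YZ10} just before the statement of the corollary.

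For the inclusion ``$\mathcal{M}$ is contained in the family of symmetric measures'', given $m\in\mathscr{M}_\bR$ with $m|_{I_n}=c_n m_n$, I would verify the identity $\int P_t f\cdot g\,dm=\int f\cdot P_t g\,dm$ piece by piece, following the decomposition used in (2)$\Rightarrow$(3) of Theorem~\ref{THM2}: the contribution from each $I_n$ reduces to the $c_n m_n$-symmetry of the restricted semigroup $(P^n_t)$, which holds because $X^n$ is $m_n$-symmetric; the contribution from the trap set $\Lambda_t$ is automatic since $P_t$ acts as the identity there; and any remaining entrance points in $\tilde{I}_n\setminus I_n$ form a polar set and do not interfere with the identity.

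For the reverse inclusion, given a fully supported $\sigma$-finite symmetric measure $m$ of $X$, I would fix an index $n$ and test the symmetry identity on positive Borel functions $f,g$ supported in $I_n$. Because $I_n$ is an invariant set of $X$, the semigroup $(P_t)$ applied to such functions agrees with $(P^n_t)$, so the restriction $m|_{I_n}$ is itself a $\sigma$-finite symmetric measure of the regular diffusion $X^n$ on $I_n$. The uniqueness result from \cite{YZ10} then forces $m|_{I_n}=c_n m_n$ for some constant $c_n\geq 0$, and strict positivity $c_n>0$ is guaranteed by the full-support hypothesis on $m$ together with $m_n$ being fully supported on $I_n$.

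The main bookkeeping obstacle lies in ensuring that the behavior of $m$ off $\bigcup_n I_n$, namely on $\Lambda_t$ and on the entrance points $\tilde{I}_n\setminus I_n$, does not obstruct either inclusion. Traps contribute trivially to the symmetry identity because $P_t$ is the identity there; entrance points are polar under the standing hypotheses, so they can be neglected in the comparison. Once these remarks are in place, the proof is indeed essentially a direct restatement of the decomposition obtained in Theorem~\ref{THM2} together with the uniqueness statement, as the author indicates by calling it trivial.
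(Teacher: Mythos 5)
The paper offers no written proof (it declares the corollary trivial and omits it), and your overall architecture --- piecewise verification of the symmetry identity for one inclusion, and restriction to the invariant sets $I_n$ plus the uniqueness result of \cite{YZ10} for the other --- is clearly the intended argument. Your reverse inclusion (every fully supported $\sigma$-finite symmetric measure restricts to $c_n m_n$ on $I_n$ with $c_n>0$) is sound.

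However, there is a genuine gap in the forward inclusion, precisely at the point you identify as the ``main bookkeeping obstacle.'' You dismiss the entrance points $\tilde{I}_n\setminus I_n$ on the grounds that they are polar, but polarity is a statement about the process, not about the measure: a fully supported $\sigma$-finite $m\in\mathcal{M}$ is free to place an atom at such a point, and then the symmetry identity fails. Concretely, let $a=a_n\in\Lambda_{pr}$ with $(a_n,b_n)\ni x\nrightarrow a$ (an entrance endpoint, which does occur under the hypotheses of Theorem~\ref{THM2}, e.g.\ for a Bessel-type component). By Lemma~\ref{LM1}~(3) one has $\mathbf{P}_a(\sigma_a<\infty)=0$, so $P_t 1_{\{a\}}\equiv 0$ for $t>0$ and hence $\int P_t 1_{\{a\}}\cdot g\,dm=0$ for every positive Borel $g$; on the other hand $\int 1_{\{a\}}\cdot P_t g\,dm=P_tg(a)\,m(\{a\})$, and $P_tg(a)>0$ for small $t$ when, say, $g=1_{(a_n,b_n)}$, since $\sigma_{a+}=0$ under $\mathbf{P}_a$. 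Thus symmetry forces $m(\{a\})=0$, while membership in $\mathcal{M}$ as written does not. To close the gap you must either prove (as above) that symmetric measures cannot charge $\tilde{I}_n\setminus I_n$ and add the condition $m\bigl(\tilde{I}_n\setminus I_n\bigr)=0$ to the description of $\mathcal{M}$, or otherwise justify why such atoms are excluded; the specific measure built in the proof of Theorem~\ref{THM2} avoids the issue only because it uses Lebesgue measure off $\cup_n I_n$, which gives singletons zero mass.
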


\section{Regularity of associated Dirichlet form}\label{SEC6}

We have noted in the proof of Theorem~\ref{THM1} that $X^n$ is determined by a so-called scale function $s_n$, i.e. a strictly increasing and continuous function on $I_n$, and a speed measure $m_n$.  To be precise,  it is associated with a regular Dirichlet form on $L^2(I_n, m_n)$:
\begin{equation}\label{eq:Dirichletforms}
\begin{aligned}
	\sF^n&:=\bigg\{f\in L^2(I_n,m_n): f\ll s_n,  \frac{df}{ds_n}\in L^2(I_n,ds_n), \\ 
	 &\qquad\qquad  f(a_n)=0\text{ (resp. }f(b_n)=0\text{) whenever }a_n \text{ (resp.  }b_n)\text{ is exit}\bigg\},\\
	 \sE^n&(f,g):=\frac{1}{2}\int_{I_n} \frac{df}{ds_n}\frac{dg}{ds_n}ds_n,\quad f,g\in \sF^n, 
\end{aligned}
\end{equation}
where $f\ll s_n$ means that $f$ is absolutely continuous with respect to $s_n$,  $ds_n$ is the Lebesgue-Stieltjes measure induced by $s_n$, and $f(a_n):=\lim_{x\downarrow a_n}f(x)$ (resp.  $f(b_n):=\lim_{x\uparrow b_n}f(x)$) is well defined for the functions under consideration; see, e.g., \cite{LY19}.  Since $a_n$ is strictly increasing and continuous, 
\[
 s_n(a_n):=\lim_{x\downarrow a}s(x),\quad s_n(b_n):=\lim_{x\uparrow b_n}s_n(x)
\]
is well defined in $\bar{\bR}=[-\infty, \infty]$ no matter $a_n, b_n$ belong to $I_n$ or not. 
In addition,  $a_n$ (resp. $b_n$) is exit  for $X^n$ (or called \emph{approachable in finite time} for $X^n$ as in \cite[(3.5.13)]{CF12}), if and only if 
\[
	\int_{a_n}^c m_n\left((x,c) \right)ds_n(x)<\infty\quad \left(\text{resp. }\int^{b_n}_c m_n\left((c,x) \right)ds_n(x)<\infty\right),
\]
where $c\in (a_n,b_n)$ is arbitrarily chosen.

Mimicking \cite[Theorem~3.2]{L19},  we can obtain the associated Dirichet form of $X^0$, a distinct union of those associated with $X^n$. The proof is trivial and we omit it.

\begin{lemma}
Assume that the equivalent conditions in Theorem~\ref{THM1} hold.  Let $m^0|_{I_n}:=m_n$ for $1\leq n\leq N$ and $m^0|_{E\setminus \cup_{1\leq n\leq N} I_n}:=0$.  Then $X^0$ is associated with the quasi-regular Dirichlet form on $L^2(E,m^0)$:
\begin{equation}\label{eq:Dirichletfom}
	\begin{aligned}
		&\sF^0:=\{f\in L^2(E,m^0): f|_{I_n}\in \sF^n, 1\leq n\leq N\},  \\
		&\sE^0(f,g):=\sum_{n=1}^N \sE^n(f|_{I_n},g|_{I_n}),\quad f,g\in \sF^0,
	\end{aligned}
\end{equation}
where $(\sE^n,\sF^n)$ is given by \eqref{eq:Dirichletforms}. 
\end{lemma}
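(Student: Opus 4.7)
The plan is to transfer the regularity of each component form $(\sE^n,\sF^n)$ to a quasi-regularity statement for the direct-sum form $(\sE^0,\sF^0)$, exploiting the fact that $X^0$ splits into mutually non-communicating components $\tilde{X}^n$ on the invariant sets $\tilde I_n$, and that $m^0$ assigns zero mass to $\tilde I_n\setminus I_n$.

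First, I would set up the algebraic skeleton. Since the $\tilde I_n$ are pairwise disjoint and $m^0(\tilde I_n\setminus I_n)=0$ by construction, the map $f\mapsto (f|_{I_n})_n$ identifies $L^2(E,m^0)$ isometrically with the orthogonal Hilbert direct sum $\bigoplus_{n=1}^N L^2(I_n,m_n)$. Because each $\tilde I_n$ is invariant for $X^0$, the semigroup $(P^0_t)$ acts componentwise, and $P^0_tf(x)=P^n_t(f|_{I_n})(x)$ for $x\in I_n$, while $f|_{\tilde I_n\setminus I_n}$ is $m^0$-negligible. It follows directly that the symmetric form associated with $(P^0_t)$ on $L^2(E,m^0)$ is exactly the direct sum $(\sE^0,\sF^0)$ written in \eqref{eq:Dirichletfom}: the domain is characterized by $\sup_{t>0}\tfrac{1}{t}\<(1-P^0_t)f,f\>_{m^0}<\infty$, which by the direct-sum structure is equivalent to $\sup_{t>0}\tfrac{1}{t}\<(1-P^n_t)f|_{I_n},f|_{I_n}\>_{m_n}<\infty$ for every $n$ with the sum over $n$ being finite; the limit gives $\sE^0(f,f)=\sum_n \sE^n(f|_{I_n},f|_{I_n})$.

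Next I would verify quasi-regularity of $(\sE^0,\sF^0)$ by invoking the quasi-regularity conditions of, e.g., \cite[Theorem~1.3.7]{CF12} or \cite{MR92}. Each $(\sE^n,\sF^n)$ is regular on $L^2(I_n,m_n)$ in the topology of $I_n$ (equipped with the interval topology coming from the scale $s_n$); in particular, each admits an $\sE^n_1$-nest $(K^n_k)_{k\ge 1}$ of compact sets, together with a countable $\sE^n_1$-dense subfamily of $\sF^n\cap C_c(I_n)$ that separates points of $I_n$ and vanishes $\sE^n$-quasi-everywhere off $K^n_k$. Taking $F_k:=\bigcup_{n\le k}K^n_k$ produces an $\sE^0_1$-nest on $E$ (equipped with the coproduct topology of the $\tilde I_n$), and concatenating the separating families yields a countable family in $\sF^0$ which is $\sE^0_1$-dense and separates $\sE^0$-q.e.\ points of $E$. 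Finally, all $\tilde I_n\setminus I_n$ consist of entrance boundary points which, as noted before the lemma, are polar for $X^n$ and hence $\sE^0$-exceptional; this lets one absorb them into the exceptional set in the quasi-regularity definition.

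The main obstacle is bookkeeping when $N=\infty$: one must check that the union $F_k=\bigcup_{n\le k}K^n_k$ is compact in the coproduct topology and that the overall countable family is genuinely $\sE^0_1$-dense (not just dense in each piece), but both are immediate from the orthogonality of components in $\sE^0$ and $L^2$. Since the claim is by design parallel to \cite[Theorem~3.2]{L19} with $(a_n,b_n)$ replaced by $\langle a_n,b_n\rangle$ and exit boundaries treated via the Dirichlet condition in \eqref{eq:Dirichletforms}, nothing beyond this bookkeeping is required, which is why the lemma is stated without detailed proof.
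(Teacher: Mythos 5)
Your proposal is correct and follows exactly the route the paper intends: the paper omits the proof as "trivial," pointing to the decomposition of $X^0$ into the mutually non-communicating regular components $X^n$ on the invariant sets $\tilde I_n$ (with $m^0(\tilde I_n\setminus I_n)=0$ and $\tilde I_n\setminus I_n$ polar) and to the analogous argument in \cite[Theorem~3.2]{L19}, which is precisely the direct-sum identification and componentwise quasi-regularity check you carry out. Your added bookkeeping for the case $N=\infty$ (finite unions of compact nest members, $\sE^0_1$-density of the concatenated families) is sound and simply makes explicit what the paper leaves to the reader.
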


Finally we turn to the case $X$ is symmetrizable on $\bR$. Take $m\in \mathcal{M}$ in Corollary~\ref{COR1} and assume without loss of generality that $c_1=\cdots =c_N=1$.  The associated Dirichlet form of $X$ on $L^2(\bR,m)$ can be expressed analogically as
\[
	\begin{aligned}
		&\sF:=\{f\in L^2(\bR,m): f|_{I_n}\in \sF^n, 1\leq n\leq N\},  \\
		&\sE(f,g):=\sum_{n=1}^N \sE^n(f|_{I_n},g|_{I_n}),\quad f,g\in \sF. 
	\end{aligned}
\]
Clearly, $(\sE,\sF)$ is quasi-regular but not necessarily regular on $L^2(\bR,m)$, because $m$ is probably not Radon.  We end this section with a result to show when $(\sE,\sF)$ is regular. 

\begin{theorem}
Let $m\in \mathcal{M}$ and assume that the equivalent conditions in Theorem~\ref{THM2} hold.  Then the associated Dirichlet form $(\sE,\sF)$ of $X$ is regular on $L^2(\bR,m)$, if and only if $m$ is Radon on $\bR$. 
\end{theorem}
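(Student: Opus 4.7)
The plan is to establish the two implications separately. For the forward direction (regular $\Rightarrow$ Radon), I would argue by contradiction: if $m$ fails to be Radon there exists $x_0 \in \bR$ with $m(U) = \infty$ for every open neighborhood $U$ of $x_0$. Continuity forces every $f \in C(\bR)$ satisfying $f(x_0) \neq 0$ to have infinite $L^2(\bR, m)$-norm, since $|f|^2 \geq |f(x_0)|^2/4$ on a neighborhood of $x_0$. Consequently every $f \in \sF \cap C_c(\bR) \subset L^2(\bR, m)$ must vanish at $x_0$, which obstructs uniform approximation of any bump function $g \in C_c(\bR)$ with $g(x_0) = 1$ and contradicts regularity.

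For the reverse direction (Radon $\Rightarrow$ regular), I would exploit the decomposition $\bR = \Lambda_t \cup \bigcup_{1 \leq n \leq N} I_n$ (up to a polar entrance-boundary set) derived in the proof of Theorem~\ref{THM2}, together with the fact that each component form $(\sE^n, \sF^n)$ is a regular Dirichlet form on $L^2(I_n, m_n)$. Since $m$ is Radon, $m_n$ is finite on every bounded subset of $I_n$ and $m|_{\Lambda_t}$ is Radon, so in particular $C_c(\bR) \subset L^2(\bR, m)$. For the $\sE_1$-density of $\sF \cap C_c(\bR)$ in $\sF$, given $f \in \sF$ I would first truncate by compactly supported cutoffs so that only finitely many $I_n$ contribute substantially, then approximate each $f|_{I_n}$ in $\sE^n_1$-norm by $\sF^n \cap C_c(I_n)$ via the regularity of the component form, and extend by zero elsewhere; the additive structure of $\sE$ across components makes the total energy error controllable.

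The main obstacle will be verifying the uniform density of $\sF \cap C_c(\bR)$ in $C_c(\bR)$. Given $g \in C_c(\bR)$ and $\varepsilon > 0$, I would use uniform continuity of $g$ to pick $\delta > 0$ with oscillation less than $\varepsilon$ on every interval of length $\leq \delta$; only finitely many $I_n$ meeting $\mathrm{supp}(g)$ have length $\geq \delta$, and on each of them the regularity of $(\sE^n, \sF^n)$ supplies a uniform $\sF^n$-approximant of $g|_{I_n}$. On each remaining short $I_n$ I would replace $g|_{I_n}$ by the function affine in the scale $s_n$ that matches $g$ at the two endpoints of $I_n$, an interpolation lying in $\sF^n$ and differing from $g|_{I_n}$ by at most $\varepsilon$ in uniform norm. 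Gluing these pieces continuously with $g|_{\Lambda_t}$ yields $f \in \sF \cap C_c(\bR)$ with $\|f - g\|_\infty = O(\varepsilon)$. A critical simplification is that under Theorem~\ref{THM2} we have $\Lambda_{at} = \emptyset$, which by the corollary following Theorem~\ref{THM1} eliminates any finite exit endpoints for $X^n$; thus no vanishing boundary constraint in $\sF^n$ obstructs the matching of the approximation to $g$ at shunt or trap endpoints of $I_n$.
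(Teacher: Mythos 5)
Your necessity argument is fine and fills in what the paper dismisses as ``clear.'' For sufficiency, however, you take a genuinely different route from the paper --- the paper does not verify the two density conditions by hand, but invokes \cite[Corollary~2.13]{LY19}, which reduces regularity of a form of the type \eqref{eq:Dirichletfom} to the single ``adaptedness'' condition \eqref{eq:33}: for a finite endpoint, $a_n\in I_n$ if and only if $s_n(a_n)>-\infty$. The entire content of the paper's proof is the verification of this dichotomy via the exit criterion $\int_{a_n}^c m_n((x,c))\,ds_n(x)<\infty$, and this is precisely where the Radon hypothesis enters (to get $m_n((a_n,c))\leq m([a_n,c])<\infty$, forcing $s_n(a_n)=-\infty$ when $a_n\notin I_n$). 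Your proposal never identifies this dichotomy, and never says where ``$m$ is Radon'' is actually used in the sufficiency direction; as a result the one step that depends on it breaks down.

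Concretely, the gap is your affine-in-scale interpolation on the short intervals. Consider a finite endpoint $a_n\notin I_n$ (these occur generically: entrance or natural left endpoints, intervals abutting a trap, or a point $b_{n'}\in\Lambda_{pl}$ that is simultaneously the left endpoint of the next regular interval). Since $m$ is Radon, the paper's computation shows $s_n(a_n)=-\infty$ there. Hence ``the function affine in the scale $s_n$ that matches $g$ at the two endpoints of $I_n$'' does not exist: a non-constant $f=\alpha+\beta s_n$ tends to $\mp\infty$ as $x\downarrow a_n$, so it cannot match the finite value $g(a_n)$, is unbounded (ruining the uniform estimate), and has infinite energy $\int_{I_n}\beta^2\,ds_n=\beta^2(s_n(b_n)-s_n(a_n))=\infty$, so it is not in $\sF^n$. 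The repair is to use a function that is \emph{constant} near every finite endpoint not belonging to $I_n$ (a constant has zero energy and, since $m_n((a_n,c))<\infty$ by Radonness, lies in $L^2$ locally), reserving the affine-in-$s_n$ matching for endpoints that do belong to $I_n$, where \eqref{eq:33} guarantees $s_n$ is finite. But deciding which of the two devices to use at a given endpoint is exactly the dichotomy \eqref{eq:33}, so you cannot avoid proving it; the same issue resurfaces in your $\sE_1$-density step, where the cutoff functions must have $d\chi/ds_n\in L^2(ds_n)$, and in your use of the regularity of $(\sE^n,\sF^n)$ on ``long'' intervals, which only approximates functions vanishing near the non-included endpoints of $I_n$ and so does not by itself handle $g$ with $g(a_n)\neq 0$ there.
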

\begin{proof}
The necessity is clear.  Now suppose that $m$ is Radon.  We will employ \cite[Corollary~2.13]{LY19} to prove the regularity of $(\sE,\sF)$.  Indeed,  it suffices to show that $s_n$ is adapted to $I_n$ for all $1\leq n\leq N$ in the sense of \cite[(2.21)]{LY19}. In other words,  we need to verify that when $a_n>-\infty$ (resp. $b_n<\infty$),
\begin{equation}\label{eq:33}
	a_n\in I_n \text{ (resp. } b_n\in I_n), \text{ if and only if }s_n(a_n)>-\infty \text{ (resp. }s_n(b_n)<\infty). 
\end{equation}
Only the case $a_n$ will be treated and the other case is analogical.  In practise, $a_n\in I_n$ amounts to $x\rightarrow a_n\in \Lambda_{pr}$ for $x\in (a_n,b_n)$ by the definition of $I_n$.  In other words,  $a_n$ is approachable in finite time for the part process $X^{n,0}$ of $X^n$ on $(a_n,b_n)$.  It follows from \cite[(3.5.13)]{CF12} that by fixing $c\in (a_n,b_n)$, 
\begin{equation}\label{eq:32}
	\int_{a_n}^c m_n\left((x,c) \right)ds_n(x)<\infty. 
\end{equation}
Take $c'\in (a_n,c)$ and the left hand side of  \eqref{eq:32} is greater than
\[
	\int_{a_n}^{c'} m_n\left((x,c) \right)ds_n(x)\geq m_n\left((c',c) \right)\cdot \left( s_n(c')-s_n(a_n)\right). 
\]
Note that $0<m_n\left((c',c) \right)<\infty$ and $s_n(c')$ is finite.  Hence we can conclude that $s_n(a_n)>-\infty$.  To the contrary,  suppose $a_n\notin I_n$.  This amounts to $(a_n,b_n)\ni x\nrightarrow a_n$ or $a_n\notin \Lambda_{pr}$.  The latter case implies $a_n\in \Lambda_{pl}\cup \Lambda_t$, also leading to $x\nrightarrow a_n$ because of the second step to prove (2)$\Rightarrow$(3) in Theorem~\ref{THM1} and $\Lambda_{at}=\emptyset$.  This means that $a_n$ is not approachable in finite time for $X^{n,0}$.  By using  \cite[(3.5.13)]{CF12} again, we have
\[
	\int_{a_n}^c m_n\left((x,c) \right)ds_n(x)=\infty. 
\]
Since $m_n\left((x,c) \right)\leq m\left([a_n,c] \right)<\infty$ and $s_n(c)$ is finite,  it follows that $s_n(a_n)=-\infty$.  Therefore \eqref{eq:33} is concluded and the proof is eventually completed. 
\end{proof}

\bibliographystyle{abbrv}
\bibliography{H-hypothesis}


\end{document}